\newtheorem{theorem}{Theorem}[section]
\newtheorem{lemma}[theorem]{Lemma}
\newtheorem{corollary}[theorem]{Corollary}
\newtheorem{conjecture}[theorem]{Conjecture}
\newtheorem{proposition}[theorem]{Proposition}
\newtheorem{algorithm}[theorem]{Algorithm}
\newtheorem{definition}[theorem]{Definition}
\theoremstyle{definition}
\theoremstyle{remark}
\numberwithin{subcase}{case}
\begin{document}
\title{On-line vertex ranking of trees}
\author{Daniel C. McDonald}
\address{Department of Mathematics, University of Illinois, 
Urbana, IL, USA}
\email{dmcdona4@illinois.edu}
\date{}
\maketitle 
\begin{abstract}
A $k$-ranking of a graph $G$ is a labeling of its vertices from $\{1,\ldots,k\}$ such that any nontrivial path whose endpoints have the same label contains a larger label.  The least $k$ for which $G$ has a $k$-ranking is the ranking number of $G$, also known as tree-depth.  Applications of rankings include VLSI design, parallel computing, and factory scheduling. The on-line ranking problem asks for an algorithm to rank the vertices of $G$ as they are presented one at a time along with all previously ranked vertices and the edges between them (so each vertex is presented as the lone unranked vertex in a partially labeled induced subgraph of $G$ whose final placement in $G$ is not specified).  The on-line ranking number of $G$ is the minimum over all such algorithms of the largest label that algorithm can be forced to use.  We give bounds on the on-line ranking number of trees in terms of maximum degree, diameter, and number of interior vertices.
\end{abstract}
\section{Introduction}
We consider a special type of proper vertex coloring using positive integers, called ``ranking.''  As with proper colorings, there exist variations on the original ranking problem.  In this paper we consider the on-line ranking problem, introduced by Tuza and Voigt in 1995 \cite{TV}.
\begin{definition}
A ranking of a finite simple graph $G$ is a function $f:V(G)\rightarrow\{1,2,\ldots\}$ with the property that if $u\neq v$ but $f(u)=f(v)$, then every $u,v$-path contains a vertex $w$ satisfying $f(w)>f(u)$ (equivalently, every path $P$ contains a unique vertex with largest label, where $f(v)$ is called the label of $v$).  A $k$-ranking of $G$ is a ranking $f:V(G)\rightarrow\{1,\ldots,k\}$.  The ranking number of a graph $G$, denoted here by $\rho(G)$ (though in the literature often as $\chi _r(G)$), is the minimum $k$ such that $G$ has a $k$-ranking.  
\end{definition}
Vertex rankings of graphs were introduced in \cite{IRV}, and results through 2003 are surveyed in \cite{J}.  Their study was motivated by applications to VLSI layout, cellular networks, Cholesky factorization, parallel processing, and computational geometry.  For example, vertex ranking models the efficient assembly of a graph from vertices, where each stage of construction consists of individual vertices being added in such a way that no component ever has more than one new vertex.  Vertex rankings are sometimes called ordered colorings, and the ranking number of a graph is trivially equal to its ``tree-depth,'' a term introduced by Ne\u{s}et\u{r}il and Ossona de Mendez in 2006 \cite{NO} in developing their theory of graph classes having bounded expansion.

The vertex ranking problem has spawned multiple variations, including list ranking \cite{M} and on-line ranking, studied here.  The on-line ranking problem is to vertex ranking as the on-line coloring problem is to ordinary vertex coloring. 
\subsection{The on-line vertex ranking problem}
The \emph{on-line vertex ranking problem} is a game between two players, Presenter and Ranker.  A class $\mathcal{G}$ of unlabeled graphs is shown to both players at the beginning of the game.  In round $1$, Presenter presents to Ranker the graph $G_1$ consisting of a single vertex $v_1$, to which Ranker assigns a positive integer label $f(v_i)$.  In round $i$ for $i>1$, Presenter extends $G_{i-1}$ to an $i$-vertex induced subgraph $G_i$ of a graph $G\in\mathcal{G}$ by presenting an unlabeled vertex $v_i$ (without specifying which copy of $G_i$ among all induced subgraphs of graphs in $\mathcal{G}$).  Ranker must then extend the ranking $f$ of $G_{i-1}$ to a ranking of $G_i$ by assigning $f(v_i)$.  

Presenter seeks to maximize the largest label assigned during the game, while Ranker seeks to minimize it. The \emph{on-line ranking number} of $\mathcal{G}$, denoted here by $\mathring{\rho}(\mathcal{G})$ (though in the literature often as $\chi ^*_r(\mathcal{G})$), is the resulting maximum assigned value under optimal play.  If Presenter can guarantee that arbitrarily high labels are used, then $\mathring{\rho}(\mathcal{G})=\infty$.  If $\mathcal{G}$ is the class of induced subgraphs of a graph $G$, then we define $\mathring{\rho}(G)=\mathring{\rho}(\mathcal{G})$.

Note that $\mathring{\rho}(\mathcal{G}')\leq\mathring{\rho}(\mathcal{G})$ if every graph in $\mathcal{G}'$ is an induced subgraph of a graph in $\mathcal{G}$, since any strategy for Ranker on $\mathcal{G}$ includes a strategy on $\mathcal{G}'$.  Also $\rho(G)\leq\mathring{\rho}(G)$ trivially.  

Several papers have been written about the on-line ranking number of graphs, including \cite{BH}, \cite{SS}, and \cite{STV}; some of the results from these papers will be mentioned later.  On-line ranking has also been looked at from an algorithmic perspective, in the sense that one seeks a fast algorithm for determining the smallest label Ranker is allowed to use on a given turn; see \cite{CL}, \cite{HIKR}, \cite{JL}, and \cite{JLW}.  Our paper is of the former variety.

A \emph{minimal ranking} of $G$ is a ranking $f$ with the property that decreasing $f$ on any set of vertices produces a non-ranking.  Let $\psi (G)$ be the largest label used in any minimal ranking of $G$.  Isaak, Jamison, and Narayan \cite{IJN} showed that the minimal rankings of $G$ are precisely the rankings produced when Ranker plays greedily, so $\mathring{\rho}(G)\leq\psi (G)$.  For the $n$-vertex path $P_n$, this yields $\mathring{\rho}(P_n)\leq\psi (P_n)=\lfloor\log_2(n +1)\rfloor +\lfloor\log_2(n +1-2^{\lfloor\log_2 n\rfloor -1})\rfloor$.  Bruoth and Hor\u{n}\'{a}k \cite{BH} gave the best known lower bound for paths $\mathring{\rho}(P_n)\geq 1.619(\log_2 n)-1$.
\subsection{Our Results}
Recall that the \emph{distance} between two vertices $u$ and $v$ in a connected graph $G$ is the number of edges in a shortest $u,v$-path in $G$.  The \emph{eccentricity} of $v$ is the greatest distance between $v$ and any other vertex in $G$.  The \emph{diameter} of $G$ is the maximum eccentricity of any vertex in $G$.

In Section \ref{sec:bounds}, we give bounds on the on-line ranking number of $T_{k,d}$, defined for $k\geq 2$ and $d\geq 0$ to be the largest tree having maximum degree $k$ and diameter $d$, i.e., the tree all of whose internal vertices have degree $k$ and all of whose leaves have eccentricity $d$.  Since the family of trees with maximum degree at most $k$ and diameter at most $d$ is precisely the set of connected induced subgraphs of $T_{k,d}$, our upper bound on $\mathring{\rho}(T_{k,d})$ also serves as an upper bound for the on-line ranking number of this class of graphs.
\begin{theorem}\label{kdbounds}
There exist positive constants $c$ and $c'$ such that if $d\geq 0$ and $k\geq 3$, then $c(k-1)^{\left\lfloor d/4\right\rfloor}\leq\mathring{\rho}(T_{k,d})\leq c'(k-1)^{\left\lfloor  d/3\right\rfloor}$.
\end{theorem}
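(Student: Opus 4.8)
The plan is to prove the two inequalities separately, each by exhibiting an explicit strategy for the relevant player and analyzing it through a self-similar recursion in which one multiplicative factor of $(k-1)$ is purchased with a bounded amount of diameter. The single combinatorial fact driving both directions is the following elementary \emph{merging principle}: if, in the currently presented forest, two vertices $u$ and $v$ lie in distinct components and have both been assigned the label $\ell$, then any vertex $p$ that Presenter later attaches so as to create a $u,v$-path must itself receive a label exceeding $\ell$ unless that path already carries a larger label; in a tree, attaching a single $p$ adjacent to both forces $f(p)>\ell$. Iterating this principle across the $k-1$ subtrees available at a vertex of degree $k$ is what produces geometric growth of the forced labels, while the depth needed to set up each merge controls the diameter.

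For the lower bound I would have Presenter build a sequence of rooted gadgets $G_0,G_1,G_2,\ldots$, where $G_j$ has maximum degree $\le k$, sits inside a ball of radius $2j$ about a distinguished \emph{apex} vertex (so $\mathrm{diam}(G_j)\le 4j$), and is guaranteed, under optimal Ranker play, to leave its apex exposed (with a free incident edge) carrying a label at least $c(k-1)^{j}$. The gadget $G_{j+1}$ is assembled adaptively from $k-1$ disjoint copies of $G_j$: Presenter grows the copies one at a time, reads off the label Ranker is forced to place on each copy's apex, and then introduces a fresh apex $p$ joined to the $k-1$ old apexes through length-two connectors, so that each old apex sits at distance $2$ from $p$ and $p$ stays within radius $2(j+1)$ of everything. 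Because the copies are built sequentially, Presenter can steer Ranker into either repeating a high label across two copies---whereupon the merging principle at $p$ forces a strictly larger label---or spreading the labels so thinly that $p$ is adjacent through short paths to $k-1$ large distinct labels; a short case analysis bounds Ranker's best response from below by multiplying the guaranteed apex label by a factor proportional to $k-1$. Unwinding the recurrence gives a forced label of order $(k-1)^{\lfloor d/4\rfloor}$ inside a tree of diameter $d$, which is an induced subtree of $T_{k,d}$, so monotonicity under induced subgraphs yields the claimed lower bound on $\mathring\rho(T_{k,d})$.

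For the upper bound I would exhibit a Ranker strategy that keeps every label below $c'(k-1)^{\lfloor d/3\rfloor}$, organized by the same recursion run in reverse. The idea is to reserve a small top band of labels for ``central'' vertices and to partition the remaining labels into $k-1$ blocks, each of size roughly $U(k,d-3)$, where $U(k,d)$ is the bound being proved; when a vertex $v$ is presented, Ranker assigns it to a block according to the structure of the component of the current forest containing $v$, in effect tracking which of the $\le k-1$ directions away from the evolving center $v$ lies in. Two things must be checked: first, that the resulting labeling is always a genuine ranking---this reduces to verifying that any two vertices sharing a label are separated either by a reserved central label or by lying in different blocks whose only connection runs through a higher label---and second, that the label count obeys $U(k,d)\le (k-1)\,U(k,d-3)+O\!\left((k-1)^{\lfloor d/3\rfloor}\right)$, which solves to the claimed $c'(k-1)^{\lfloor d/3\rfloor}$. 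The greedy bound $\mathring\rho(G)\le\psi(G)$ recorded above serves as a sanity check and as a base-case tool for small $d$.

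I expect the main obstacle to be the lower-bound forcing analysis rather than the diameter bookkeeping. Establishing the invariant for $G_{j+1}$ requires controlling \emph{all} of Ranker's adaptive responses as the $k-1$ copies and their connectors are revealed---in particular ruling out that Ranker economizes by reusing moderate labels across copies in a way the merges cannot punish---and it is this case analysis that determines the true exponent. The same slack is responsible for the gap between the two bounds: the length-two connectors and the radius doubling cost Presenter four units of diameter per factor of $(k-1)$, whereas Ranker's block decomposition need only give up three, so closing the interval between $d/4$ and $d/3$ would demand either a cheaper merging gadget or a sharper Ranker recursion. Minor but necessary technical points---handling the parity of $d$ inside the floor functions, treating the degenerate small-$d$ and $k=3$ base cases, and verifying that each gadget genuinely embeds in $T_{k,d}$ as an induced subtree---I would dispatch separately once the two recursions are in place.
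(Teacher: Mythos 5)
The decisive gap is in your lower bound: the ``merging principle'' cannot produce multiplicative growth, and your recursion collapses to an additive one. A label $\ell$ repeated across two components forces only that some new vertex on the connecting path exceed $\ell$ --- a gain of $+1$, not a factor of $k-1$. Already at the first level of your recursion this kills the claim: joining $k-1$ singletons through length-two connectors to a fresh apex forces only the label $3$ (Ranker puts $1$ on the singletons, $2$ on every connector, and is then forced up to exactly $3$ at the apex), independent of $k$. At later levels Ranker can play the $k-1$ copies of $G_j$ identically; each copy then has a unique maximum $L_j$, and even if Presenter adaptively attaches the connectors at those maximum vertices, every cross-copy path between repeated smaller labels passes through a copy-maximum and is already shielded, so the only forced move is one label $L_j+1$ on the new apex. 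Hence $L_{j+1}\leq L_j+O(1)$, giving labels linear in $d$, exponentially short of $(k-1)^{\lfloor d/4\rfloor}$; no case analysis rescues this, because the obstruction is an explicit Ranker strategy. The paper gets the exponential bound non-recursively: it proves exactly the additive lemma (Theorem \ref{lowerbound}: if $G-U$ has isomorphic components $G^0,G^1,\ldots,G^a$ with each $G^i$ joined to $G^0$ by a path with internal vertices in $U$, then $\mathring{\rho}(G)\geq\mathring{\rho}(F)+a$, since the maxima of the subgraphs $G[V(G^0)\cup U^i\cup V(G^i)]$ must be pairwise distinct and all exceed the maximum of $G^0$), and then applies it \emph{once} with $a$ exponentially large: inside $T^*_{k-1,r}$ with $r=\lfloor d/2\rfloor$, prune below the $(k-1)^{\lfloor r/2\rfloor}$ vertices at depth $\lfloor r/2\rfloor$, so that deleting those vertices leaves $(k-1)^{\lfloor r/2\rfloor}+1$ disjoint copies of $T^*_{k-1,\lfloor r/2\rfloor-1}$, each hanging from its own cut vertex. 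The exponential comes from the width of the tree --- exponentially many simultaneous branches --- not from iterated merging; that is the idea your proposal is missing.

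Your upper bound also does not go through as written. Arithmetically, the recursion $U(k,d)\leq(k-1)U(k,d-3)+O\bigl((k-1)^{\lfloor d/3\rfloor}\bigr)$ unrolls to $O\bigl(d(k-1)^{\lfloor d/3\rfloor}\bigr)$, since each of the roughly $d/3$ levels contributes $\Theta\bigl((k-1)^{\lfloor d/3\rfloor}\bigr)$; you would need the additive term to be constant (or geometric in the level) to get the stated bound. More fundamentally, ``which direction away from the evolving center $v$ lies in'' is not information available on-line: $v$'s component can later be merged with other components whose vertices were block-labeled relative to their own centers, and no label can be revised. The paper's strategy avoids any recursion: with $j=\lfloor d/3\rfloor$ it fixes three bands $X,Y,Z$ of sizes $|V(T^*_{k-1,j-1})|$, $|V(T^*_{k-1,j})|-|V(T^*_{k-1,j-1})|$, and $2|V(T^*_{k-1,j})|$, and selects the band for $v$ by tests computable from $v$'s current component alone --- whether the maximal subtree of $X$-labeled vertices around $v$ embeds in $T^*_{k-1,j-1}$, and whether $v$'s eccentricity in its component is at least $d-j$ --- then proves via a sequence of lemmas that no band is ever exhausted, yielding $\mathring{\rho}(T_{k,d})\leq 3|V(T^*_{k-1,j})|<6(k-1)^{\lfloor d/3\rfloor}$. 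Replacing your ``direction from the center'' bookkeeping by such eccentricity and subtree-size tests is what makes an on-line argument possible.
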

We find it informative to compare the on-line ranking number of $T_{k,d}$ to the regular ranking number of $T_{k,d}$.  
\begin{proposition}\label{ranknumber}
For $k\geq 3$, we have $\rho(T_{k,d})=\left\lceil d/2\right\rceil +1$.
\end{proposition}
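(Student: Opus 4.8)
The plan is to exploit the recursive structure of optimal rankings of trees. In any ranking of a connected graph the largest label is used exactly once, since two vertices carrying that label would require a still larger label on the path joining them. Hence, if $v$ is the top-ranked vertex of a tree $T$, then the restriction of the ranking to each component of $T-v$ is a ranking using strictly smaller labels; conversely, since the unique $u,w$-path between two vertices in one component of $T-v$ never leaves that component while any path across components passes through $v$, optimal rankings of the components can be reassembled by giving $v$ a fresh top label. This yields the identity
\[
\rho(T)=1+\min_{v\in V(T)}\ \max_i \rho(T_i),
\]
where $T_1,\dots,T_m$ are the components of $T-v$. Both bounds in the proposition will be read off from this identity.

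The engine of the argument is the ranking number of the perfect $(k-1)$-ary tree $S_h$ of height $h$, i.e.\ the rooted tree in which the root and every internal vertex have exactly $k-1$ children and all leaves lie at depth $h$. I claim $\rho(S_h)=h+1$, which I prove by induction on $h$, the base $S_0$ being a single vertex. For the upper bound, give the root the label $h+1$ and rank its $k-1$ child subtrees, each a copy of $S_{h-1}$, using $\rho(S_{h-1})=h$. For the lower bound I use the identity: for every vertex $v$ of $S_h$, some component of $S_h-v$ contains an intact copy of $S_{h-1}$. This is immediate when $v$ is the root, and otherwise the component of $S_h-v$ containing the root retains at least one of the root's $k-1\ge 2$ child subtrees untouched (this is exactly where the hypothesis $k\ge 3$ is used). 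Thus $\max_i\rho(T_i)\ge\rho(S_{h-1})=h$ for every $v$, and the identity gives $\rho(S_h)\ge h+1$.

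It remains to describe $T_{k,d}$ and apply these facts, after disposing of the small cases $d\in\{0,1\}$ directly. When $d=2r$ is even, $T_{k,d}$ is a center $c$ joined to $k$ copies of $S_{r-1}$; rooting at $c$ gives $\rho(T_{k,d})\le 1+\rho(S_{r-1})=r+1$, while $T_{k,d}$ contains $S_r$ as a subgraph (retain $k-1$ of the center's branches), so by subgraph-monotonicity of $\rho$ we get $\rho(T_{k,d})\ge\rho(S_r)=r+1$. When $d=2r+1$ is odd, $T_{k,d}$ consists of two copies of $S_r$ whose roots $c_1,c_2$ are joined by an edge; rooting at $c_1$ leaves components consisting of $k-1$ copies of $S_{r-1}$ together with the entire $c_2$-side, which is a copy of $S_r$, so $\rho(T_{k,d})\le 1+\rho(S_r)=r+2$.

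The main obstacle is the matching lower bound in the odd case, since $T_{k,2r+1}$ does \emph{not} contain $S_{r+1}$ as a subgraph, so subgraph-monotonicity alone only yields $\rho\ge r+1$. Here I return to the identity: wherever the top vertex $v$ falls, one full copy of $S_r$ survives inside a single component of $T_{k,d}-v$. Indeed, if $v$ lies in or equals a vertex of the $c_1$-side, then $c_2$ remains attached to $c_1$, so the component containing $c_2$ contains the whole $c_2$-side $S_r$ (and if $v=c_1$ that side is itself a component); the case where $v$ lies in the $c_2$-side is symmetric. Hence $\max_i\rho(T_i)\ge\rho(S_r)=r+1$ for every choice of $v$, and the identity gives $\rho(T_{k,2r+1})\ge r+2$. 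Combining the even and odd computations then establishes $\rho(T_{k,d})=\lceil d/2\rceil+1$.
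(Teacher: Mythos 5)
Your proof is correct, and it reaches the result by a noticeably different route than the paper, even though both rest on the same engine: in a tree, the unique top-labeled vertex $v$ splits the problem into independently ranking the components of $T-v$. The paper runs this recursion through a dominance criterion — if some vertex $u$ has the property that for every $w$, each component of $T-u$ is isomorphic to a subtree of some component of $T-w$, then $u$ may serve as the top vertex of an optimal ranking — and then inducts on the layers $F_i$ of $T_{k,d}$ (vertices within distance $i$ of a leaf) to show that its explicit construction (label $i+1$ at distance $i$ from the nearest leaf, with one adjustment when $d$ is odd) is optimal. You instead isolate the rooted perfect $(k-1)$-ary trees $S_h$ (the paper's $T^*_{k-1,h}$), compute $\rho(S_h)=h+1$ by induction on height using subgraph monotonicity of $\rho$ together with the observation that deleting any single vertex leaves an intact copy of $S_{h-1}$ in some component (which is exactly where $k\ge 3$ enters), and then assemble $T_{k,d}$ from these trees in separate even and odd cases — correctly flagging that the odd case needs its own surviving-copy argument, since $S_{r+1}$ is not a subgraph of $T_{k,2r+1}$ and monotonicity alone falls short by one. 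Your route is more modular and self-contained: the min-max identity, monotonicity, and the surviving-copy trick are all standard and fully spelled out, and the role of the hypothesis $k\ge 3$ is pinpointed. The paper's dominance lemma buys something slightly stronger in return: it certifies that the specific layered labeling is itself an optimal ranking of $T_{k,d}$, rather than only establishing that the optimum value equals $\left\lceil d/2\right\rceil +1$.
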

\begin{proof}
The construction for the upper bound assigns label $i+1$ to vertices at distance $i$ from the nearest leaf, with the exception of labeling one of the vertices in the central edge of $T_{k,d}$ with $(d+3)/2$ if $d$ is odd.  For the lower bound, note that choosing the unique highest ranked vertex $v$ of a tree $T$ reduces the ranking problem to individually ranking the components of $T-v$.  Thus if there exists $u\in V(T)$ such that for every $w\in V(T)$ each component of $T-u$ is isomorphic to a subtree of some component of $T-w$, then $T$ can be optimally ranked by optimally ranking each component of $T-u$ and labeling $u$ one greater than the largest label used on those components.  Letting $F_i$ denote the subforest of $T_{k,d}$ induced by the set of vertices within distance $i$ of a leaf, we conclude by induction on $i$ that for $1\leq i\leq\left\lceil d/2\right\rceil$, each component of $F_i$ is optimally ranked by the upper bound construction.
\end{proof}
Setting $n=|V(T_{k,d})|$ and using Theorem \ref{kdbounds} and Proposition \ref{ranknumber}, we see that $\mathring{\rho}(T_{k,d})=\Omega (\sqrt{n})$ while $\rho (T_{k,d})=O(\log n)$.  Thus $\mathring{\rho}$ is exponentially larger than $\rho$ on these trees.  Theorem \ref{pqbound} shows that this large separation between $\rho$ and $\mathring{\rho}$ does not hold for all trees.  Nevertheless we conjecture a general upper bound like that of Theorem \ref{kdbounds}.
\begin{conjecture}\label{treeconjecture}
There exist universal constants $a$ and $b$ satisfying $0<a<1<b$ such that $\mathring{\rho}(T)\leq b(kn)^a$ for any $n$-vertex tree $T$ with maximum degree $k$.
\end{conjecture}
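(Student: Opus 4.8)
The plan is to exhibit, for a suitable universal exponent $a\in(0,1)$ (aiming for $a=2/3$, the value forced by the upper bound of Theorem~\ref{kdbounds}), a single Ranker strategy that works on every tree and whose worst-case label is $O((kn)^{a})$. One regime is handled by Theorem~\ref{kdbounds} directly: if $T$ is an $n$-vertex tree of maximum degree at most $k$ and diameter $d$, then $T$ is a connected induced subgraph of $T_{k,d}$, so by the monotonicity noted in the introduction $\mathring{\rho}(T)\le\mathring{\rho}(T_{k,d})\le c'(k-1)^{\lfloor d/3\rfloor}$. This already settles the conjecture whenever $d$ is small relative to $\log n$: since $(k-1)^{d/3}\le n^{a}$ exactly when $d\le \tfrac{3a\log n}{\log(k-1)}$, the embedding bound suffices for every tree of diameter $O(\log_{k} n)$. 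The entire difficulty thus concentrates in the complementary ``long and thin'' regime, where $d$ is large; there the embedding bound is exponential in $n$ and useless, but such trees are intuitively path-like, and paths are cheap to rank (recall $\mathring{\rho}(P_n)=O(\log n)$).

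First I would try to interpolate between these extremes by a recursive decomposition. Fix a longest path $S$ (a ``spine'') of $T$; deleting the interior of $S$ leaves a collection of rooted subtrees (``bushes'') hanging off the spine, each of strictly smaller diameter. I would assign disjoint blocks of labels to the spine and to the bushes: the spine, being a path, is ranked with the logarithmic path strategy using a block of $O(\log n)$ top labels, while each bush is ranked recursively, under the crucial constraint that a label reused in two different bushes be dominated along every spine-crossing path by a strictly larger spine label. The total label count is then governed by the spine cost plus the number of bushes times the per-bush cost, and the hope is that the diameter drop at each recursion level, together with the $\lfloor d/3\rfloor$ savings already present in Theorem~\ref{kdbounds}, balances the bush-count growth so that the overall bound stays at $(kn)^{2/3}$. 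An alternative, possibly cleaner, route is a centroid recursion: offline one has $\rho(T)\le 1+\rho(T')$ with $|T'|\le n/2$, and I would seek an online analogue in which Ranker reserves a block (rather than a single label) per centroid level, paying a polynomial instead of constant per-level cost that still telescopes to a sublinear power of $n$.

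The hard part will be the online commitment: Ranker must label each presented vertex irrevocably while seeing only a partially labeled induced subforest, with no information about which global tree is being built or where the current fragment sits within it. This defeats any strategy that relies on identifying a spine or a centroid in advance, since the adversary can reveal vertices in an order that hides the eventual diameter and branching structure, and can interleave spine vertices with bush vertices so that block assignments cannot be made locally. Converting the offline recursive decompositions above into adversary-proof online strategies --- so that the reserved label-blocks are guaranteed to dominate along every path regardless of presentation order --- is exactly the obstruction, and is precisely what separates $\mathring{\rho}$ from $\rho$ in the first place. I expect a robust solution to require a potential/amortization argument assigning each vertex a budget as a function of the current distances to already-labeled vertices, designed so that the budget automatically realizes the $\lfloor d/3\rfloor$-type savings of Theorem~\ref{kdbounds} on dense regions while degrading only logarithmically on path-like regions, thereby yielding the single universal exponent $a<1$ demanded by the conjecture.
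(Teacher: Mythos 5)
This statement is a \emph{conjecture}: the paper does not prove it, and neither does your proposal. What you have written is a research program, not a proof. You yourself defer the entire mathematical content at the end: the passage beginning ``The hard part will be the online commitment'' concedes that every decomposition you describe (spine-plus-bushes, centroid recursion) is an \emph{offline} construction that the adversary's presentation order defeats, and the closing sentence (``I expect a robust solution to require a potential/amortization argument\ldots'') is a statement of hope, not an argument. No online strategy is actually defined, no invariant is maintained, and no bound is derived; so there is nothing here that could be checked for correctness. For comparison, the paper offers the conjecture only as a guess supported by partial evidence: Theorem~\ref{kdbounds} (the bounded-degree, bounded-diameter case you correctly cite), Theorem~\ref{pqbound} (few internal vertices), and the subdivided-star example after Theorem~\ref{lowerbound} showing that the factor $k$ in $b(kn)^a$ cannot be removed.

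Beyond incompleteness, one of your structural claims is wrong in a way that matters. You assert that the difficulty ``concentrates in the long and thin regime'' and that large-diameter trees are ``intuitively path-like.'' They are not: a tree can simultaneously have large diameter and large maximum degree (e.g., a spider whose legs are long paths, or a subdivided star with long subdivisions), and it is exactly this mixed regime --- where neither the $c'(k-1)^{\lfloor d/3\rfloor}$ bound nor the $O(\log n)$ path bound applies --- that carries the difficulty. The paper's own subdivided-star example ($K_{1,a}$ with each edge subdivided once) has diameter only $4$ yet forces $\mathring{\rho}(G)>|V(G)|/2$, which shows that Presenter's power comes from hub-plus-connector structure rather than from length, and that any per-region ``budget'' of the kind you envision must grow polynomially in the local degree, not just in the local diameter. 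Your spine recursion also leaves its accounting unproven: you never bound the number of bushes, the label-block sizes, or the recursion depth, and the claim that the totals ``telescope to $(kn)^{2/3}$'' is asserted rather than computed. As it stands, the proposal identifies the correct known ingredients but does not advance beyond the paper's own assessment of the problem, which is that it remains open.
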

In Section \ref{sec:small}, we consider the on-line ranking number of trees with few internal vertices.  Let $\mathcal{T}^{p,q}$ be the family of trees having at most $p$ internal vertices and diameter at most $q$.  The main result of that section is an upper bound on $\mathring{\rho}(\mathcal{T}^{p,q})$ for any $p$ and $q$.
\begin{theorem}\label{pqbound}
$\mathring{\rho}(\mathcal{T}^{p,q})\leq p+q+1$.
\end{theorem}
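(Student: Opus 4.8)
The plan is to exhibit an explicit strategy for Ranker using two disjoint pools of labels: a \emph{low} pool $\{1,\dots,q+1\}$ and a \emph{high} pool $\{q+2,\dots,q+p+1\}$. First I would record the structural fact that drives everything: in any tree $T$ the vertices of degree at least $2$ induce a subtree $C$ (the \emph{core}), every vertex outside $C$ is a leaf of $T$ pendant to $C$, and any $T\in\mathcal{T}^{p,q}$ has $|V(C)|\le p$ and $\mathrm{diam}(C)\le q$. At every stage the graph presented so far is an induced subforest of some such $T$; a vertex of current degree at least $2$ is already known to lie in $C$, while a leaf of $T$ always appears with current degree at most $1$.

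The high pool is meant to give the (at most $p$) core vertices pairwise distinct labels, and the low pool is meant to label everything else. The difficulty is entirely \emph{on-line}: offline these trees are easy to rank with $p+1$ labels (give every leaf label $1$ and the at most $p$ core vertices distinct labels $2,\dots,p+1$, which is a ranking because any two distinct leaves are separated by a higher-labeled core vertex), so the whole content of the theorem is the price of committing to a label before a vertex's role is known. Two things can go wrong: a vertex presented with small degree may later acquire degree $\ge 2$ and become a core vertex, and a vertex presented in isolation may turn into a high-degree center whose many pendant leaves then want a common label.

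To control this I would assign low labels so that a vertex's label encodes how ``deep'' the subtree below it could still grow. Concretely, Ranker maintains the invariant that whenever a low-labeled vertex $u$ separates the forest, its label strictly exceeds every low label, in the parts it cuts off, that it might later have to dominate; and it gives a newly presented ambiguous vertex the smallest low label consistent with this. The point is that a chain of nested ``an internal vertex sitting above an internal vertex above $\dots$'' has length at most the diameter, so at most $q+1$ low levels are ever needed; this is exactly where the summand $q$ enters. A fresh high label is spent only when a vertex is confirmed to lie in $C$ and its already-placed neighbors force a label above $q+1$; I would then exhibit an injection from the high-label events into $V(C)$, so at most $p$ high labels are used.

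Finally I would verify that the resulting labeling is always a ranking by the usual path argument: for two equally low-labeled vertices the separating vertex guaranteed by the invariant carries a strictly larger label, for two equally high-labeled vertices there is nothing to check (high labels are distinct), and mixed pairs need no check. Summing the two pools gives $\mathring{\rho}(\mathcal{T}^{p,q})\le (q+1)+p$. I expect the main obstacle to be the accounting in the third paragraph: making the low-label invariant precise enough that (i) it is genuinely maintainable one vertex at a time against an adversarial presentation order, (ii) it provably never calls for a low label exceeding $q+1$, and (iii) the forced high labels inject into distinct core vertices. Establishing (i)--(iii) simultaneously, rather than each in isolation, is the crux, since the adversary can interleave the growth of the core with the attachment of leaves so as to attack all three at once.
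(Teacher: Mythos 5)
Your proposal is a plan rather than a proof, and the gap you yourself flag at the end is genuine: everything hinges on a ``low-label invariant'' that is never actually formulated. The phrase ``its label strictly exceeds every low label, in the parts it cuts off, that it might later have to dominate'' does not define a checkable property of a partial labeling, so neither (i) nor (ii) can even be stated precisely, let alone proved, and the ranking verification in your last paragraph quotes this nonexistent invariant. The only justification offered for (ii) --- that a chain of nested internal vertices has length at most the diameter --- is a heuristic, not an argument. Moreover, (ii) is simply false unless the low-label rule is chosen very carefully, which shows real content is missing: take $q=2$ (star components). If Ranker labels an isolated vertex $3$ and its first pendant neighbor $1$, then a second pendant on the $1$-vertex is forced to take $2$, and a third pendant on it can take none of $1,2,3$ (each choice puts two equal labels on a path whose middle vertex is the $1$); so this Ranker needs a fourth low label on a diameter-$2$ tree, whereas labeling the first pendant $2$ instead of $1$ survives. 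Thus ``the smallest low label consistent with the invariant'' has no meaning until the invariant is specified, and specifying it so that (i) and (ii) hold simultaneously is the heart of the theorem, not a detail to be deferred.

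For comparison, the paper supplies exactly this missing mechanism, without pools. Its Algorithm \ref{Ranksmall} gives an isolated vertex label $q+1$, and otherwise greedily assigns the largest label below the current component maximum $m$ that completes a ranking, or $m+1$ if none exists. The crucial Lemma \ref{Leaf} shows that a vertex arriving as a leaf of a nontrivial component never receives a label as large as $m$: assuming otherwise, one traces from the new vertex the path obtained by repeatedly stepping to the least-labeled neighbor-at-arrival; the greedy rule forces the $i$-th vertex of this path to have label at most $i$, while the far end is an isolated arrival labeled $q+1$ whose predecessor must have label at least $q$, so the path has length $q+1$, contradicting diameter at most $q$. That path-tracing argument is the rigorous version of your ``nested chain'' intuition and is what you would need to prove. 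Theorem \ref{RanksmallThm} then finishes by your style of accounting: only arrivals of degree at least $2$ (at most $p$ of them, since such vertices are internal in the final tree) can raise the maximum, each by exactly $1$. Your high-pool bookkeeping is, in fact, the sound part of your plan --- a fresh label exceeding $q+1$ given to a degree-$\geq 2$ arrival can never violate the ranking condition, and there are at most $p$ such arrivals --- and once a leaf-arrival lemma like Lemma \ref{Leaf} is available, your two-pool architecture does yield $p+q+1$ (e.g., by running the paper's greedy inside each component of the low forest). But that lemma is precisely what your proposal leaves unproved.
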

Since $q\leq p+1$, this establishes $\mathring{\rho}(\mathcal{T}^{p,q})\leq 2p+2$.  We also compute $\mathring{\rho}(\mathcal{T}^{2,3})=4$.  This extends the work of Schiermeyer, Tuza, and Voigt \cite{STV}, who characterized the families of graphs having on-line ranking number $1$, $2$, and $3$.
\newpage
\section{Strategies for Presenter and Ranker on $T_{k,d}$} 
\label{sec:bounds}
In this section, we obtain upper and lower bounds on $\mathring{\rho}(T_{k,d})$, where $T_{k,d}$ is the largest tree having maximum degree $k$ and diameter $d$.  For convenience, we let $T^*_{k,r}$ denote the tree with unique root vertex $v^*$ such that every internal vertex has $k$ children and every leaf is distance $r$ from $v^*$.  For $U\subseteq V(G)$, let $G[U]$ denote the subgraph of $G$ induced by $U$.
\subsection{A strategy for Presenter}
We first develop a tool for proving lower bounds.
\begin{theorem}\label{lowerbound}
Let $G$ be a connected graph.  Suppose for some $U\subset V(G)$ that $G-U$ has components $G^0,G^1,\ldots ,G^a$, all isomorphic to some graph $F$.  If also $U$ contains disjoint subsets $U^1,\ldots,U^a$ so that each $U^i$ consists of the internal vertices of a path joining $G^0$ and $G^i$, then $\mathring{\rho}(G)\geq\mathring{\rho}(F)+a$.
\end{theorem}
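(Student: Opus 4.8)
The plan is to prove the lower bound by exhibiting a strategy for Presenter, exploiting that $G^0,G^1,\ldots,G^a$ are $a+1$ vertex-disjoint copies of $F$ that become pairwise joined through the hub $G^0$ once the connecting paths are revealed. Write $r=\mathring{\rho}(F)$. First I would run a \emph{parallel simulation} phase: since the copies lie in distinct components of $G-U$, Presenter can present their vertices without yet revealing any vertex of $U$, so in the partially labeled graph these copies form pairwise non-adjacent induced subgraphs, each of which Ranker must rank as an isolated (induced subgraph of a) copy of $F$. Running the optimal $F$-strategy on each copy in parallel forces every $G^i$ to contain a vertex with label at least $r$. Because the copies are isomorphic and Presenter is not obliged to fix which presented copy is which in $G$, after seeing Ranker's responses Presenter relabels the indices so that the copy whose forced maximum label $M_0$ is \emph{smallest} plays the role of $G^0$; thus each spoke $G^i$ (for $1\le i\le a$) carries a vertex $x_i$ with $f(x_i)\ge M_0\ge r$, while every label in the hub $G^0$ is at most $M_0$.

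Next I would reveal the connecting paths. Presenting the interiors $U^1,\ldots,U^a$ joins each spoke to the hub, so for $i\ne j$ the vertices $x_i$ and $x_j$ are joined by a path running through $U^i$, then $G^0$, then $U^j$. The heart of the argument is a distinctness claim: in any legal ranking the $a$ spoke-side high labels are pairwise distinct and all strictly exceed $M_0$, whence the largest of them is at least $M_0+a\ge r+a$. The mechanism is the one already visible for $P_5$, the spider with two legs: two equal labels on different spokes would require a strictly larger label somewhere on the joining path, but that path passes only through the two connecting interiors and the hub, and the hub carries only labels at most $M_0$. Choosing $G^0$ to be the copy forced \emph{lowest} is exactly what makes the hub serve as a separator lying below the spoke labels rather than above them.

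The step I expect to be the main obstacle is controlling the labels Ranker places inside the hub $G^0$ and along the connecting interiors $U^i$, since a priori Ranker could hide a large label on some $U^i$ and use it as an illicit separator that defeats the distinctness claim. I would handle this by revealing the paths \emph{one spoke at a time} while maintaining the invariant that the hub still realizes its maximum $M_0$: when the spoke being connected has current maximum equal to $M_0$, the resulting pair of equal labels forces a label strictly above $M_0$ onto the fresh path $U^i$ itself, and I then track that vertex in place of $x_i$; when the spoke's maximum already exceeds $M_0$, its existing high label is used directly. In either case the tracked label strictly exceeds $M_0$, which bounds every separator available through the hub, so each newly produced label differs from all previously produced ones.

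Verifying that this bookkeeping genuinely yields $a$ \emph{simultaneously} distinct labels above $M_0$ (rather than the merely logarithmic gain one would get from unstructured pairwise merging) is the delicate part, and it is precisely the star topology, in which all spokes communicate only through the single low hub, that forces the $a$ contributions to be pairwise distinct; deviations in which Ranker uses higher labels only raise the maximum and so help Presenter. Once this is established, $a$ distinct labels each exceeding $M_0\ge r$ give a label of size at least $r+a$, proving $\mathring{\rho}(G)\ge\mathring{\rho}(F)+a$.
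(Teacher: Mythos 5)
Your Presenter strategy coincides with the paper's: play the optimal $F$-strategy on $a+1$ pairwise disjoint copies, let the copy whose forced maximum $M_0$ is smallest serve as the hub $G^0$, then present $U$. The gap is in your distinctness step. You track one distinguished vertex per spoke (either the spoke's own maximum vertex, or a vertex of $U^i$ forced above $M_0$) and claim the tracked labels are pairwise distinct because every separator "available through the hub" is at most $M_0$. But a path joining two tracked vertices does not meet only the hub: it also traverses $U^i$ and $U^j$ (and possibly more of $G^i$ and $G^j$), and Ranker may legally place labels there that exceed both tracked labels. Such a label is a perfectly legitimate separator, so two equal tracked labels yield no contradiction. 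Concretely, suppose both spokes finish the simulation phase with maximum $M_0+1$, so your tracked labels are both $M_0+1$; when you attach $U^1$, Ranker places $M_0+2$ on it, and when you attach $U^2$, Ranker places $M_0+3$ on it. The final labeling can be a valid ranking, yet your bookkeeping records two equal tracked labels and certifies a gain of only $1$, not $a$. Your spoke-by-spoke presentation and invariant maintenance do not prevent this (and are in any case unnecessary); you anticipated exactly this danger in your third paragraph, but the proposed repair does not neutralize it.

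The missing idea --- and it is what the paper does --- is to stop tracking individual vertices and instead set $m_i$ equal to the maximum label over all of $V(G^i)\cup U^i$, so that any high label Ranker "hides" on a connecting path counts \emph{for} Presenter rather than against him. Everything then follows from one static fact about the final ranking: in a ranking of a connected graph, the largest label occurs at a unique vertex. Applied to the connected subgraph $H^i=G[V(G^0)\cup U^i\cup V(G^i)]$, this gives $m_i>m_0$ (the maximum of $G^i$ alone is already at least $m_0$ by the choice of hub, and equality of maxima at two vertices of $H^i$ is impossible); applied to $H^i\cup H^j$, it gives $m_i\neq m_j$ for $i\neq j$. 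Hence the largest $m_i$ is at least $m_0+a\geq\mathring{\rho}(F)+a$. Note that no adaptive ordering of $U$ and no invariant are needed: $U$ may be presented in any order, and the inequality is read off from the final ranking.
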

\begin{proof}
Presenter has a strategy to produce a copy of $F$ on which Ranker must use a label at least $\mathring{\rho}(F)$.  Begin by playing this strategy $a+1$ times on distinct sets of vertices.  Index the resulting copies of $F$ as $G^0,G^1,\ldots ,G^a$ so that $G^0$ is a copy whose largest label is smallest (in the labeling by Ranker) among the copies of $F$.  Present $U$ in any order to complete $G$.  

Let $m_0$ denote the largest label given to a vertex in $V(G^0)$.  For $1\leq i\leq a$, let $m_i$ denote the largest label given to a vertex in $V(G^i)\cup U^i$.  Set $H^i=G[V(G^0)\cup U^i\cup V(G^i)]$ for $1\leq i\leq a$.  Each $H^i$ is a connected subgraph of $G$, so $m_0<m_i$.  For $i\neq j$, $H^i\cup H^j$ is a connected subgraph of $G$, so $m_i\neq m_j$.  Thus the largest $m_i$ satisfies $m_i\geq m_0+a\geq\mathring{\rho}(F)+a$.
\end{proof}
\begin{figure}[ht]
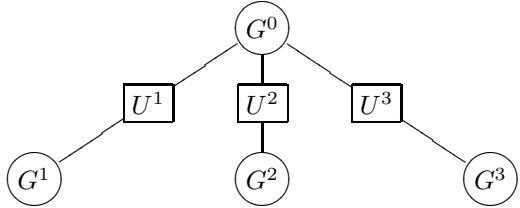

\centering
\[ \xygraph{
!{<0cm,0cm>;<3cm,0cm>:<0cm,1cm>::}
!{(1,2) }*++[o][F-:<3pt>]{G^0}="a"
!{(.5,1) }*+[F]{U^1}="b"
!{(1,1) }*+[F]{U^2}="c"
!{(1.5,1) }*+[F]{U^3}="d"
!{(0,0) }*++[o][F-:<3pt>]{G^1}="e"
!{(1,0) }*++[o][F-:<3pt>]{G^2}="f"
!{(2,0) }*++[o][F-:<3pt>]{G^3}="g"
"a"-"b"-"e" "a"-"c"-"f" "a"-"d"-"g"
} \]
\caption{The graph $G$ of Theorem \ref{lowerbound}.}
\label{lowfig}
\end{figure}
Note that $T_{k,2r}$ consists of a copy of $T^*_{k-1,r}$ and a copy of $T^*_{k-1,r-1}$ with an edge joining their roots, and $T_{k,2r+1}$ consists of two copies of $T^*_{k-1,r}$ with an edge joining their roots.  Hence $T^*_{k-1,\left\lfloor d/2\right\rfloor}$ is an induced subgraph of $T_{k,d}$, so a lower bound on $\mathring{\rho}(T^*_{k-1,\left\lfloor d/2\right\rfloor})$ also serves as a lower bound on $\mathring{\rho}(T_{k,d})$.
\begin{corollary}
If $k\geq 2$ and $r\geq 0$, then $\mathring{\rho}(T^*_{k,r})\geq k^{\left\lfloor r/2\right\rfloor}$.
\end{corollary}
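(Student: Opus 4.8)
The plan is to derive the bound from a single application of Theorem~\ref{lowerbound}, choosing the base graph $F$ to be a copy of $T^*_{k,h}$ for a carefully tuned $h$ and then exhibiting $k^{\lfloor r/2\rfloor}$ internally disjoint ``spokes.'' Write $s=\lfloor r/2\rfloor$. For $r\in\{0,1\}$ we have $s=0$ and the inequality $\mathring{\rho}(T^*_{k,r})\ge 1=k^{0}$ is automatic, so I would assume $r\ge 2$ and set $h=s-1\ge 0$ and $F=T^*_{k,h}$.

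For the hub $G^0$ I would take the \emph{top} copy of $F$: the subtree induced by all vertices within distance $h$ of the root $v^*$, which is plainly isomorphic to $T^*_{k,h}$ and whose leaves are exactly the vertices at distance $h$ from $v^*$. For each of the $k^{h+1}$ vertices $x$ at distance $h+1$ from $v^*$ I would fix one descendant $w_x$ of $x$ at distance $r-h$ from $v^*$ and declare the subtree rooted at $w_x$ to be a component; this subtree is isomorphic to $T^*_{k,r-(r-h)}=T^*_{k,h}=F$. Everything else I would place in $U$, letting $U^x$ be the interior of the vertical $v^*$--$w_x$ path, i.e.\ the chain of vertices at distances $h+1,\dots,r-h-1$ from $v^*$ lying above $w_x$.

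The checks I would then run are threefold. (i) The components of $G-U$ are exactly $G^0$ and the $k^{h+1}$ chosen subtrees, each a copy of $F$: deleting the distance-$(h+1)$ vertices (all of which lie in $U$) cuts the hub off below its leaves, while the parent of each $w_x$ lies in $U^x\subseteq U$ and so isolates the subtree at $w_x$. (ii) The spokes are pairwise disjoint, since $U^x$ is contained in $\{x\}$ together with the principal subtree rooted at $x$, and distinct distance-$(h+1)$ vertices root disjoint subtrees. (iii) Each $U^x$ is nonempty, which is precisely the condition $h+1\le r-h-1$, that is $h\le(r-2)/2$. With $a=k^{h+1}=k^{s}$, Theorem~\ref{lowerbound} then gives $\mathring{\rho}(T^*_{k,r})\ge\mathring{\rho}(T^*_{k,h})+k^{s}\ge 1+k^{\lfloor r/2\rfloor}$, which even beats the stated bound.

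The step I expect to demand the most care is the depth bookkeeping underlying (i)--(iii): I must guarantee at once that a \emph{full} copy of $T^*_{k,h}$ fits below each distance-$(h+1)$ vertex (needing $r-h\ge h+1$) and that a \emph{nonempty} connecting chain survives between that copy and the hub (needing $r-h-1\ge h+1$). Both inequalities hold for $h=\lfloor r/2\rfloor-1$, and this is exactly the largest $h$ for which they do, which pins down the choice of $h$ and hence the exponent $k^{h+1}=k^{\lfloor r/2\rfloor}$; the ``excess'' subtree hanging below each $x$ beyond the single chosen copy is simply swept into $U$ so that every surviving non-hub component is a copy of $F$ reached by its own private spoke.
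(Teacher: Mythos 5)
Your proof is correct and takes essentially the same approach as the paper: a single application of Theorem~\ref{lowerbound} with $F$ a complete $k$-ary tree of depth $\lfloor r/2\rfloor-1$, the hub $G^0$ sitting at the root and $k^{\lfloor r/2\rfloor}$ further copies hanging below the middle level, joined by short vertical spokes. The only differences are bookkeeping: the paper reduces to even $r$ via induced-subgraph monotonicity and prunes the tree so each spoke is a single middle vertex, whereas you keep all of $T^*_{k,r}$, sweep the excess branches into $U$, and handle both parities at once.
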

\begin{proof}
Since $T^*_{k,r}$ is an induced subgraph of $T^*_{k,r+1}$, we have $\mathring{\rho}(T^*_{k,r})\leq\mathring{\rho}(T^*_{k,r+1})$, so we may assume that $r$ is even.  Set $a=k^{r/2}$, and let $U$ be the set of vertices $u_1,\ldots ,u_a$ at distance $r/2$ from $v^*$.  Define $G$ to be the subtree of $T^*_{k,r}$ obtained by deleting, for each $u_i\in U$, the descendants of all but one child of $u_i$.  Now $G-U$ consists of $a+1$ disjoint copies of $T^*_{k,r/2-1}$.  Let $G^0$ be the component rooted at $v^*$, and for $1\leq i\leq a$ let $G^i$ be the component rooted at the child of $u_i$.  Setting $U^i=\{u_i\}$ for $1\leq i\leq a$, we see that $U^i$ contains the lone vertex of the path joining $G^0$ and $G^i$.  By Theorem \ref{lowerbound}, $\mathring{\rho}(T^*_{k,r})\geq\mathring{\rho}(G)\geq a$.
\end{proof}
\begin{corollary}
If $k\geq 3$ and $d\geq 0$, then $\mathring{\rho}(T_{k,d})\geq (k-1)^{\left\lfloor d/4\right\rfloor}$.
\end{corollary}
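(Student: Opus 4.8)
The plan is to chain together the two ingredients already assembled just before the statement: the structural observation that $T^*_{k-1,\left\lfloor d/2\right\rfloor}$ is an induced subgraph of $T_{k,d}$, and the preceding corollary bounding $\mathring{\rho}(T^*_{k,r})$ from below. Since the heavy lifting (the Presenter strategy of Theorem \ref{lowerbound} applied to the balanced tree) has already been carried out in the previous corollary, this statement should follow as a short deduction.

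First I would invoke monotonicity of $\mathring{\rho}$ under passage to induced subgraphs, noted earlier in the introduction: because $T^*_{k-1,\left\lfloor d/2\right\rfloor}$ embeds in $T_{k,d}$ as an induced subgraph, we have
\[
\mathring{\rho}(T_{k,d})\geq\mathring{\rho}\bigl(T^*_{k-1,\left\lfloor d/2\right\rfloor}\bigr).
\]
Next I would apply the previous corollary with its parameter $k$ replaced by $k-1$ and its parameter $r$ replaced by $\left\lfloor d/2\right\rfloor$; this substitution is legitimate since $k\geq 3$ forces $k-1\geq 2$, exactly the hypothesis of that corollary. This yields
\[
\mathring{\rho}\bigl(T^*_{k-1,\left\lfloor d/2\right\rfloor}\bigr)\geq (k-1)^{\left\lfloor\left\lfloor d/2\right\rfloor/2\right\rfloor}.
\]

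The only remaining step is to simplify the nested floor. By the standard identity $\left\lfloor\left\lfloor n/a\right\rfloor/b\right\rfloor=\left\lfloor n/(ab)\right\rfloor$ for positive integers $a,b$, we have $\left\lfloor\left\lfloor d/2\right\rfloor/2\right\rfloor=\left\lfloor d/4\right\rfloor$, and combining the two displayed inequalities gives $\mathring{\rho}(T_{k,d})\geq (k-1)^{\left\lfloor d/4\right\rfloor}$, as desired. There is no real obstacle here: the argument is a two-line composition of results already in hand, and the nested-floor identity is the only computational point to verify (a routine check, e.g. by writing $d=4m+s$ with $0\leq s\leq 3$). The value of the statement lies not in its proof but in recording the lower bound in the form $(k-1)^{\left\lfloor d/4\right\rfloor}$ that matches the upper bound of Theorem \ref{kdbounds}.
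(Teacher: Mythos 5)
Your proposal is correct and is exactly the argument the paper intends: the paper states this corollary without proof precisely because it follows immediately from the preceding observation that $T^*_{k-1,\left\lfloor d/2\right\rfloor}$ is an induced subgraph of $T_{k,d}$ (hence $\mathring{\rho}(T_{k,d})\geq\mathring{\rho}(T^*_{k-1,\left\lfloor d/2\right\rfloor})$) combined with the previous corollary applied with $k-1$ and $r=\left\lfloor d/2\right\rfloor$, plus the nested-floor identity $\left\lfloor\left\lfloor d/2\right\rfloor/2\right\rfloor=\left\lfloor d/4\right\rfloor$. Your verification that $k\geq 3$ gives $k-1\geq 2$, matching the hypothesis of that corollary, is the only hypothesis check needed, and you did it.
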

We finish this subsection with a comment on Conjecture \ref{treeconjecture}.  Subdivide each edge of the star $K_{1,a}$ to get a $(2a+1)$-vertex tree $G$.  Letting $G^0,G^1,\ldots,G^a$ correspond to the vertices of the unique maximum independent set of $G$, Theorem \ref{lowerbound} yields $\mathring{\rho}(G)\geq a+1>|V(G)|/2$.  Thus Conjecture \ref{treeconjecture} cannot be strengthened to the statement ``There exist universal constants $a$ and $b$ satisfying $0<a<1<b$ such that $\mathring{\rho}(T)\leq bn^a$ for any tree $n$-vertex tree $T$.''
\subsection{A strategy for Ranker}
We now exhibit a strategy for Ranker to establish an upper bound on $\mathring{\rho}(T_{k,d})$.  In Section \ref{sec:small} we shall see $\mathring{\rho}(T_{k,0})=1$, $\mathring{\rho}(T_{k,1})=2$, $\mathring{\rho}(T_{k,2})=3$, $\mathring{\rho}(T_{k,3})=4$, $\mathring{\rho}(T_{k,4})\leq k+6$, and $\mathring{\rho}(T_{k,5})\leq 2k+6$, so here we only consider $d\geq 6$.  In specifying a strategy for Ranker on $T_{k,d}$, we will give a procedure for ranking the presented vertex $v$ based solely on the component containing $v$ in the graph presented so far.
\begin{definition}
Let $T(v)$ denote the component containing $v$ when $v$ is presented.  Given two sets $A$ and $B$ of labels, not necessarily disjoint, let $T_B(v)$ be the largest subtree of $T(v)$ containing $v$ all of whose other vertices are labeled from $B$.  Should it exist, let $f^A_B(v)$ denote the smallest element of $A$ that would complete a ranking of $T_B(v)$.
\end{definition}  
The following lemmas analyze when $f^A_B(v)$ exists and, if it does exist, when $f^A_B(v)$ provides a valid label that Ranker can give $v$.
\begin{lemma}\label{exist}
Suppose that each vertex $u\in V(T_B(v))$ labeled from $A$ was given label $f^A_B(u)$ when it arrived.  If $\min A>\max (B-A)$, and every component of $T_B(v)-v$ lacks some label in $A$, then $f^A_B(v)$ exists.
\end{lemma}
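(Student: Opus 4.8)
The plan is to show that Ranker can always legally use the label $\max A$ on $v$; this makes the set of elements of $A$ that complete a ranking of $T_B(v)$ nonempty, so its least element $f^A_B(v)$ exists. The first reduction is a separation observation: \emph{if no vertex of $T_B(v)-v$ carries the label $\max A$, then assigning $v$ the label $\max A$ completes a ranking of $T_B(v)$.} Indeed, since $\min A>\max(B-A)$ we have $\max B=\max A$, so every label already present is at most $\max A$ and, by assumption, strictly less than $\max A$; hence $v$ becomes the unique vertex of largest label. Any two like-labeled vertices lying in a common component of $T_B(v)-v$ are already separated by a larger label there, while any two lying in different components have their joining path pass through $v$, whose label $\max A$ then serves as the required larger label. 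Thus it suffices to prove that $\max A$ occurs on no vertex of $T_B(v)-v$.

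To this end I would establish the structural claim that \emph{in each component $C$ of $T_B(v)-v$, the labels from $A$ that occur on $C$ form an initial segment $\{a_1,\ldots,a_j\}$ of $A$}, where $a_1<\cdots<a_m$ list the elements of $A$ in increasing order. I would prove this by induction downward from the largest label $a_j$ occurring on $C$: examine the vertex $u$ that was the first presented among those of $C$ receiving its particular label. By hypothesis $u$ was given $f^A_B(u)$, so at its arrival the next-smaller element of $A$ failed to complete a ranking of $T_B(u)$. Such a failure is caused either by an already-present like-labeled vertex reachable through labels at most that value, or by a strictly larger label that would be forced to repeat along some path through $u$; but $\min A>\max(B-A)$ forces every label exceeding an element of $A$ to itself lie in $A$, and at a \emph{first} occurrence no label as large as $a_j$ is yet present, so the second cause is unavailable. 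Hence the next-smaller element of $A$ already occurs on a vertex joined to $u$ by labels below it, which therefore lies in the same component $C$; iterating this drives the segment down to $a_1$.

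Finally I would combine the two pieces. By the structural claim a component of $T_B(v)-v$ omits some element of $A$ exactly when its initial segment is proper, that is, exactly when it omits $\max A$. Since every component omits some element of $A$ by hypothesis, no component contains $\max A$, so $\max A$ occurs nowhere on $T_B(v)-v$; the separation observation then yields that $f^A_B(v)$ exists. The main obstacle is the structural claim, and more precisely the bookkeeping at a first occurrence of each label: one must rule out the ``larger-label'' cause of failure, which is exactly what the hypothesis $\min A>\max(B-A)$ provides by confining the entire induction to labels within $A$.
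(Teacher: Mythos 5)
Your proof is correct and follows essentially the same route as the paper: both arguments establish that the $A$-labels used on each component of $T_B(v)-v$ form an initial segment of $A$ (using the hypothesis that $A$-labeled vertices were labeled greedily together with $\min A>\max(B-A)$ to rule out failures caused by labels outside $A$), so every component omits $\max A$, which therefore completes a ranking when placed on $v$. One nitpick: $\max B=\max A$ need not hold, since $\max A$ may not lie in $B$; what is true, and what your argument actually uses, is $\max B\leq\max A$.
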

\begin{proof}
Let $A=\{a_1,\ldots,a_m\}$, with $a_1<\ldots <a_m$.  For a component $T$ of $T_B(v)-v$ having $q$ distinct labels from $A$, we claim that the largest label used on $T$ is $a_q$.  Each vertex $u\in V(T_B(v))$ labeled from $A$ was given label $f^A_B(u)$ when it arrived, with $\min A>\max (B-A)$, so if $f^A_B(u)=a_i$ then either $i=1$ or $a_{i-1}$ was already used in $T_B(u)$ (since otherwise $a_{i-1}$ would complete a ranking).  Hence all used labels are less than all missing labels in $A$.  Since every component of $T_B(v)-v$ lacks some label in $A$, we thus have $a_q<a_m$.  Therefore $a_m$ is a valid label for $v$ in $T_B(v)$ because the largest label on any path through $v$ would be used only at $v$.  Hence $f^A_B(v)$ exists.
\end{proof}
\begin{lemma}\label{valid}
Suppose that $f^A_B(v)$ exists.  If $T(v)=T_B(v)$ or if all vertices of $T(v)-V(T_B(v))$ having a neighbor in $T_B(v)$ are in the same component of $T(v)-v$ and have labels larger than $\max (A\cup B)$, then setting $f(v)=f^A_B(v)$ is a valid move by Ranker.
\end{lemma}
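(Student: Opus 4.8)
The plan is to verify directly that assigning $f(v)=f^A_B(v)$ leaves the presented graph validly ranked. Since $v$ is the only newly presented vertex and its component is $T(v)$, all components other than $T(v)$ are untouched, and the only paths whose status can change are those passing through $v$ inside $T(v)$. So it suffices to show that, once $f(v)$ is set to $f^A_B(v)$, every path $P$ through $v$ in $T(v)$ has a unique vertex of largest label. If $T(v)=T_B(v)$ this is immediate, since by hypothesis $f^A_B(v)$ exists and hence completes a ranking of $T_B(v)$; so the work is entirely in the second hypothesis, and I would treat that case.

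First I would record a structural fact. Because $T(v)$ is a tree and $T_B(v)$ is a connected subtree containing $v$, the intersection $P\cap T_B(v)$ is a subpath $P'$ containing $v$, and a path that leaves $T_B(v)$ cannot return to it. Consequently $P$ is obtained from $P'$ by appending at most two ``tails'' lying in $T(v)-V(T_B(v))$, one at each endpoint of $P'$; moreover the first vertex of any nonempty tail is adjacent to $T_B(v)$ and therefore is one of the boundary vertices named in the hypothesis, so its label exceeds $\max(A\cup B)$.

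The crucial step is to show that at most one of the two tails is nonempty, and I expect this to be the main obstacle, since it is exactly where the ``single component of $T(v)-v$'' assumption does its work. If both tails were present they would lie on opposite sides of $v$ along $P$, hence in distinct components of $T(v)-v$; in particular their two initial boundary vertices would lie in distinct components, contradicting the hypothesis that all boundary vertices share one component of $T(v)-v$. Some care is needed with the position of $v$ inside $P'$ (whether $v$ is interior to $P'$ or is an endpoint at which a tail attaches), but in every configuration the two tails depart $v$ through distinct neighbors and thus land in distinct components.

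With at most one tail the conclusion follows by comparing labels. Every vertex of $P'=P\cap T_B(v)$ lies in $T_B(v)$, so its label lies in $A\cup B$, whence all labels on $P'$ are at most $\max(A\cup B)$; in particular $f(v)=f^A_B(v)\le\max(A\cup B)$. If $P$ has no tail then $P\subseteq T_B(v)$, and the unique-maximum property holds because $f^A_B(v)$ completes a ranking of $T_B(v)$. If $P$ has exactly one tail, then $P'$ and the tail are vertex-disjoint and the initial boundary vertex of the tail has label exceeding $\max(A\cup B)$, so every label on $P'$ is strictly smaller than some label on the tail. The tail is a path consisting entirely of previously presented vertices, so the already-valid ranking endows it with a unique largest label, and this label, exceeding everything on $P'$, is the unique largest label on all of $P$. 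Hence every path through $v$ in $T(v)$ has a unique largest label, and $f(v)=f^A_B(v)$ is a valid move.
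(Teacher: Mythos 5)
Your proof is correct and takes essentially the same route as the paper's: both reduce to paths through $v$, use the single-component hypothesis to show such a path can leave $T_B(v)$ on at most one side of $v$, and then use the fact that boundary vertices carry labels exceeding $\max(A\cup B)$ to produce a dominating label. The only difference is bookkeeping: the paper verifies the ``equal-labeled endpoints require a larger internal vertex'' formulation of ranking, while you verify the equivalent ``unique largest label on every path'' formulation (which is why you additionally invoke the prior ranking's validity on the tail, a step the paper does not need).
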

\begin{proof}
Set $f(v)=f^A_B(v)$.  Let $P$ be an $x,y$-path in $T(v)$ such that $x\neq y$, $f(x)=f(y)=\ell$, and $v\in V(P)$.  We show that $P$ has an internal vertex $z$ satisfying $f(z)>\ell$.  Since $f^A_B(v)$ completes a ranking of $T_B(v)$, we may assume that $T(v)\neq T_B(v)$ and $P$ contains some vertex outside $T_B(v)$.  By hypothesis all such vertices having a neighbor in $T_B(v)$ are in the same component of $T(v)-v$, so we may assume $x\in V(T(v))-V(T_B(v))$ and $y\in V(T_B(v))$.  

Since $v$ is labeled from $A$ and $T_B(v)-v$ is labeled from $B$ with $y\in V(T_B(v))$, we have $\ell\in A\cup B$.  By hypothesis all vertices of $T(v)-V(T_B(v))$ having neighbors in $T_B(v)$ have labels larger than $\max (A\cup B)$, so $x$ has no neighbor in $T_B(v)$.  Hence $P$ contains some internal vertex $z$ outside $T_B(v)$ with a neighbor in $T_B(v)$.  By hypothesis, $f(z)>\max (A\cup B)\geq\ell$.
\end{proof}
Set $j=\left\lfloor d/3\right\rfloor$.  Break the labels from $1$ to $3|V(T^*_{k-1,j})|$ into three segments, with $X$ consisting of the lowest $|V(T^*_{k-1,j-1})|$ labels, $Y$ the next $|V(T^*_{k-1,j})|-|V(T^*_{k-1,j-1})|$ labels, and $Z$ the remaining high labels.  For $k\geq 3$, we give Ranker a strategy in the on-line ranking game on $T_{k,d}$ that uses labels from $X\cup Y\cup Z$.  Since $\mathring{\rho}(T_{k,d})\leq 3|V(T^*_{k-1,j})|=3((k-1)^j+\sum^{j-1}_{i=0}(k-1)^i)<6(k-1)^j$, this establishes the following.
\begin{theorem}
If $d\geq 0$ and $k\geq 3$, then $\mathring{\rho}(T_{k,d})\leq 6(k-1)^{\left\lfloor  d/3\right\rfloor}$.
\end{theorem}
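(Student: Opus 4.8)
The plan is to analyze the three-tier greedy strategy dictated by the partition $X<Y<Z$, verifying it tier by tier with Lemmas~\ref{exist} and~\ref{valid}. When a vertex $v$ is presented, Ranker first attempts the bottom segment by setting $f(v)=f^X_X(v)$; if that fails to exist or to be valid, it attempts the middle segment via $f^Y_{X\cup Y}(v)$; and failing that, it uses the top segment via $f^Z_{X\cup Y\cup Z}(v)$. The segment sizes are chosen precisely so that the hypothesis $\min A>\max(B-A)$ of Lemma~\ref{exist} holds at every tier: at the first tier $B-A=\emptyset$, at the second $B-A=X$ with $\min Y>\max X$, and at the third $B-A=X\cup Y$ with $\min Z>\max(X\cup Y)$. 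Because Ranker always assigns exactly the value $f^A_B$ produced at whichever tier it lands in, the standing hypothesis of Lemma~\ref{exist}---that every $A$-labeled vertex of $T_B(v)$ received label $f^A_B$ on arrival---is maintained automatically by induction on the rounds.

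First I would establish the existence half at the two lower tiers. The governing invariant is that each maximal region labeled only from $X$ embeds in $T^*_{k-1,j-1}$, and each maximal region labeled only from $X\cup Y$ embeds in $T^*_{k-1,j}$. Since a minimal ranking of a connected graph never skips a label and therefore uses at most $|V|$ labels, we have $\psi(T^*_{k-1,j-1})\le |V(T^*_{k-1,j-1})|=|X|$ and $\psi(T^*_{k-1,j})\le |V(T^*_{k-1,j})|=|X\cup Y|$. Thus as long as the relevant region stays within its embedding bound, greedy labeling from $A$ never exhausts $A$, so by Lemma~\ref{exist} the value $f^A_B(v)$ exists; a vertex is forced up a tier exactly when joining its block would push that block past its embedding capacity, which caps the height of every maximal $X$-region and every maximal $(X\cup Y)$-region.

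Next I would check validity through Lemma~\ref{valid}. When $v$ is placed at a given tier after the lower tiers fail, the vertices of $T(v)-V(T_B(v))$ adjacent to $T_B(v)$ are precisely the higher-labeled vertices that separate $v$'s block from the rest of $T(v)$. The spill was triggered by a single saturated neighboring block, and the tree structure forces all such adjacent vertices to lie in the one component of $T(v)-v$ lying on that side; by the segment ordering their labels all exceed $\max(A\cup B)$. This is exactly the hypothesis of Lemma~\ref{valid}, so each of Ranker's assignments is a legal move, and the resulting $f$ is a valid ranking at every stage.

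The main obstacle---and the point at which the choice $j=\lfloor d/3\rfloor$ is essential---is showing the top tier never overflows, i.e.\ that $f^Z_{X\cup Y\cup Z}(v)$ always exists so that no label outside $X\cup Y\cup Z$ is ever required. Here I would combine the height caps above with the diameter constraint: a vertex that demanded a fourth tier would sit above a chain of saturated blocks whose combined span along a single branch forces a root-to-leaf path of length exceeding $3j\ge d-2$, contradicting that $T(v)$ has diameter at most $d$. Hence no component of $T(v)-v$ can exhaust $Z$, Lemma~\ref{exist} applies at the top tier, and every vertex is labeled from $X\cup Y\cup Z$. The detailed bookkeeping of the saturation heights is the routine but delicate part of the argument; once it is in place, the largest label used is at most $3|V(T^*_{k-1,j})|<6(k-1)^{j}$, which is the claimed bound.
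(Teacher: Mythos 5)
Your proposal describes a genuinely different algorithm from the paper's, and its central invariant is false, so the proof does not go through. The paper's Algorithm \ref{Rankcomplete} does not escalate tiers greedily by ``block saturation'': its trichotomy is governed by the eccentricity of $v$ in $T(v)$ relative to the threshold $d-j$. Labels from $Y$ may only be given to vertices of eccentricity at least $d-j$ (Case II), and labels from $Z$ only to vertices of eccentricity less than $d-j$ (Case III). That positional condition is what drives every structural lemma; e.g.\ Lemma \ref{separate} begins from the fact that a $Y$-labeled vertex has a component $H(y)$ of diameter at least $d-j-1$ on one side, which forces everything on its other sides to be small and $X$-labeled. Your substitute invariant---every maximal $(X\cup Y)$-labeled region embeds in $T^*_{k-1,j}$---fails for your rule. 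Concretely, take $j\geq 3$ and let Presenter build $k$ disjoint paths of $j-1$ vertices (your rule assigns all of them labels from $X$), then present a center $c$ adjacent to one end of each path. Now $T_X(c)$ is a $k$-spider with legs of length $j-1$, which does not embed in $T^*_{k-1,j-1}$ (a degree-$k$ vertex must map to a non-root internal vertex, and then $k-1$ of the legs must descend, so they have length at most $j-2$), so your rule pushes $c$ up a tier and gives it a label from $Y$, placing a $Y$-label at the \emph{center} of its component. Presenter then extends one leg vertex by vertex: each new vertex $u$ has $T_X(u)$ a path of at most $2j-1$ vertices whose unique outside neighbor is $c$, so your rule keeps assigning labels from $X$ until that leg has length $2j-1$; the diameter stays at $3j-2\leq d$ throughout. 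The resulting $(X\cup Y)$-region is a spider with one leg of length $2j-1$ and $k-1$ legs of length $j-1$, and this does not embed in $T^*_{k-1,j}$ (the long leg forces the center to depth $j-1$, leaving depth $1$ for the remaining legs). Under the paper's algorithm this configuration never arises: $c$ has eccentricity $j-1<d-j$, so Case III applies and $c$ receives a label from $Z$, not $Y$.

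The collapse of that invariant takes down both halves of your argument. For existence at tier $Y$, note that even if the region did embed in $T^*_{k-1,j}$, Lemma \ref{exist} requires every component of $T_{X\cup Y}(v)-v$ to lack a label of $Y$; a size bound alone does not prevent a component from containing all $|Y|$ such labels, which is why the paper bounds the \emph{number} of $Y$-labeled vertices via the Steiner-tree argument resting on Lemmas \ref{separate}--\ref{zyy}. For validity, your claim that the high-labeled neighbors of $T_B(v)$ ``all lie in the one component of $T(v)-v$'' is false in general: Presenter may present $v$ with edges into several previously disjoint components, each contributing such a neighbor; the paper's case conditions are built around exactly this (Case I demands a single $Y$-labeled cut vertex, and in Case III one has $T_{X\cup Y\cup Z}(v)=T(v)$ so Lemma \ref{valid} is vacuous). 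Finally, the step you yourself flag as the crux---that $Z$ never overflows---is not proved by your ``chain of saturated blocks along a single branch'' claim: the events that trigger $Z$-labels under your rule need not be aligned along any path, and what must be bounded is the total count of distinct $Z$-labels in a component. The paper obtains this bound only through the eccentricity condition: by Lemma \ref{z} the $Z$-labeled vertices arise as leaves of a growing subtree containing all smaller $Z$-labels, each inserted with eccentricity less than $d-j$, and Lemma \ref{largest} caps the number of such insertions by $2|V(T^*_{k-1,j})|=|Z|$. Your proposal contains no mechanism replacing this counting argument, so even setting aside the false invariant, the bound on the top segment is unestablished.
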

The goal of our strategy for Ranker is to label from $X\cup Y$ many vertices that lie within distance $j-1$ of a leaf, reserving $Z$ for a small number of middle vertices.  
\begin{algorithm}\label{Rankcomplete}
Compute $f(v)$ according to the following table.
\end{algorithm}
\begin{tabular}{|c|p{12cm}|} \hline
\textbf{Value of f(v)} & \textbf{Conditions} \\ \hline
(I) $f^X_X(v)$ &
\begin{enumerate*}[label=(\arabic*),itemjoin={\newline}]
\item $T_X(v)$ is isomorphic to a subgraph of $T^*_{k-1,j-1}$, and
\item either $T_X(v)=T(v)$ or there exists a vertex $u$ in $T(v)$ labeled from $Y$ such that $T_X(v)$ is the component of $T(v)-u$ containing $v$.
\end{enumerate*} \\ \hline
(II) $f^Y_{X\cup Y}(v)$ & \begin{enumerate*}[label=(\arabic*),itemjoin={\newline}]
\item The eccentricity of $v$ in $T(v)$ is at least $d-j$, and
\item there exists no vertex $u$ in $T(v)$ labeled from $Y$ such that $T_X(v)$ is the component of $T(v)-u$ containing $v$.
\end{enumerate*} \\ \hline
(III) $f^Z_{X\cup Y\cup Z}(v)$ & \begin{enumerate*}[label=(\arabic*),itemjoin={\newline}]
\item The eccentricity of $v$ in $T(v)$ is less than $d-j$, and 
\item either $T_X(v)$ is not isomorphic to a subgraph of $T^*_{k-1,j-1}$ or $T_X(v)\neq T(v)$.
\end{enumerate*} \\ \hline
\end{tabular}
\newpage
Before we go any further, we need to show that Algorithm \ref{Rankcomplete} is, in fact, an algorithm.  Note that $d-j\geq 2j$.
\begin{proposition}
When playing the on-line ranking game on $T_{k,d}$, each presented vertex $v$ satisfies the conditions of exactly one of the three cases.
\end{proposition}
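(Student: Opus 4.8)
The plan is to encode the three cases as Boolean predicates and reduce the statement to one structural lemma about the separating vertex. Write $P$ for ``$T_X(v)$ is isomorphic to a subgraph of $T^*_{k-1,j-1}$,'' $Q$ for ``$T_X(v)=T(v)$,'' $R$ for ``some $Y$-labeled $u\in V(T(v))$ has $T_X(v)$ as the component of $T(v)-u$ containing $v$,'' and $S$ for ``$v$ has eccentricity at least $d-j$ in $T(v)$.'' Then Case (I) is $P\wedge(Q\vee R)$, Case (II) is $S\wedge\neg R$, and Case (III) is $\neg S\wedge(\neg P\vee\neg Q)$. The only immediate logical input is $Q\Rightarrow\neg R$: if $T_X(v)=T(v)$ there is no vertex of $T(v)$ outside $T_X(v)$ to play the role of $u$.

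First I would dispose of the case $Q$. Then $\neg R$, so Case (I) reduces to $P$, Case (II) to $S$, and Case (III) to $\neg S\wedge\neg P$. If $P$ also holds, then $T(v)=T_X(v)$ embeds in $T^*_{k-1,j-1}$, whose diameter is $2(j-1)$; since $d-j\geq 2j>2(j-1)$ the eccentricity of $v$ is less than $d-j$, so $S$ fails and only Case (I) applies, while if $P$ fails exactly one of the complementary conditions $S,\neg S$ selects exactly one of Cases (II), (III). For $\neg Q$, Case (I) becomes $P\wedge R$, Case (II) becomes $S\wedge\neg R$, and Case (III) becomes $\neg S$ (its second clause now being automatic). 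A short truth-table check then shows that exactly one case applies in every instance, \emph{provided} one proves the key lemma: $R$ implies both $P$ and $S$. Everything hinges on this.

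To prove the lemma I would carry two invariants by induction on the rounds, so that the proposition and the invariants are established together: (INV1) every maximal $X$-labeled subtree embeds in $T^*_{k-1,j-1}$ (hence has diameter at most $2(j-1)$), which holds because $X$-labels are assigned only in Case (I) under exactly the hypothesis $T_X\subseteq T^*_{k-1,j-1}$; and (INV2) every $Y$-labeled vertex $u$ was labeled in Case (II), so when it arrived it had eccentricity at least $d-j$, whence there is a vertex $w$ with $\mathrm{dist}(u,w)\geq d-j$, and this distance persists because the unique $u,w$-path in a tree is preserved as the component grows. Let $c_0$ be the neighbor of $u$ lying in $T_X(v)$. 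The crux is to show $w\notin T_X(v)$. Since $v$ is presented after $u$ was labeled, $v$ does not lie on the preserved $u,w$-path; so if $w\in T_X(v)$ then $c_0$ and $w$ would lie in a common component of $T_X(v)-v$, which by (INV1) has diameter at most $2(j-1)$, contradicting $\mathrm{dist}(c_0,w)=\mathrm{dist}(u,w)-1\geq d-j-1\geq 2j-1$. Hence $w$ is separated from $v$ by $u$, giving $\mathrm{dist}(v,w)=\mathrm{dist}(v,u)+\mathrm{dist}(u,w)\geq d-j$, i.e.\ $S$.

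It remains to deduce $P$. With $w$ on the far side of $u$, every $a\in V(T_X(v))$ satisfies $\mathrm{dist}(a,w)=\mathrm{dist}(a,c_0)+1+\mathrm{dist}(u,w)$, and since this is at most the diameter $d$ of $T_{k,d}$ we get $\mathrm{dist}(a,c_0)\leq j-1$. Thus $c_0$ lies within distance $j-1$ of all of $T_X(v)$; moreover $c_0$ has a neighbor $u$ outside $T_X(v)$, so it has at most $k-1$ neighbors inside, and rooting $T_X(v)$ at $c_0$ exhibits it as a subgraph of $T^*_{k-1,j-1}$, giving $P$. The main obstacle, and the step I would be most careful about, is locating the witness $w$ relative to $T_X(v)$ in the previous paragraph: it is exactly here that the chronology of the game (that $v$ is newer than $u$'s eccentricity witness) combines with (INV1) and the hypothesis $d-j\geq 2j$, and getting this interaction right is what makes the whole proposition go through.
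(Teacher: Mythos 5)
Your proof is correct, but it takes a genuinely different route from the paper's, and in one respect it is more careful. The paper splits on your predicate $S$ (the eccentricity condition) rather than on $Q$: when $S$ fails, it shows Cases I and III are exclusive and exhaustive by bounding the eccentricity of a hypothetical separating vertex $u$ by $\max\{d-j-2,2j-1\}<d-j$ (an argument that uses $P$ as a hypothesis plus monotonicity of eccentricity to conclude $u$ could not have been labeled from $Y$); when $S$ holds, it notes $P\Rightarrow\neg Q$ and disposes of $R\Rightarrow P$ in one sentence (``$u$ had eccentricity at least $d-j$ in $T(u)$, so $T_X(v)$ is isomorphic to a subgraph of $T^*_{k-1,j-1}$ since $T_{k,d}$ has diameter $d$''). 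You instead isolate the single unconditional lemma $R\Rightarrow P\wedge S$ and reduce the proposition to a mechanical check, proving the lemma by carrying the invariants (INV1)--(INV2) through the rounds. This is strictly stronger than the corresponding conditional facts the paper proves (it only establishes $\neg S\wedge P\wedge\neg Q\Rightarrow\neg R$, i.e.\ a version of $R\Rightarrow S$ that presupposes $P$), and your ``crux'' step --- using (INV1) and the chronology of the game to place the eccentricity witness $w$ outside $T_X(v)$ --- is exactly the detail that the paper's one-sentence justification of $R\Rightarrow P$ leaves implicit: without it, nothing a priori excludes the witness lying inside $T_X(v)$, in which case $T_X(v)$ would contain a path of length at least $d-j-1$ and could not embed in $T^*_{k-1,j-1}$. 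So your approach costs some bookkeeping (the explicit induction maintaining the invariants) but buys a cleaner logical structure and fills a real elision in the published argument.
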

\begin{proof}
If the eccentricity of $v$ in $T(v)$ is less than $d-j$, then Case II does not apply.  If furthermore $T_X(v)$ is isomorphic to a subgraph of $T^*_{k-1,j-1}$ and $T_X(v)=T(v)$, then Case I applies but Case III does not.  Otherwise, Case III applies, but Case I does not since if $T_X(v)$ is isomorphic to a subgraph of $T^*_{k-1,j-1}$, then $T_X(v)\neq T(v)$ and a vertex $u$ in $T(v)$ such that $T_X(v)$ is the component of $T(v)-u$ containing $v$ would have eccentricity at most $\max\{d-j-2,2j-1\}$, which is less than $d-j$, precluding $u$ from being labeled from $Y$.

If the eccentricity of $v$ in $T(v)$ is at least $d-j$, then Case III does not apply.  If furthermore $T_X(v)$ is isomorphic to a subgraph of $T^*_{k-1,j-1}$, then the eccentricity of $v$ in $T_X(v)$ is at most $2j-2$, so $T_X(v)\neq T(v)$ since $2j-2<d-j$.  Thus Case I only applies if $T_X(v)$ is isomorphic to a subgraph of $T^*_{k-1,j-1}$ and there exists a vertex $u$ in $T(v)$ labeled from $Y$ such that $T_X(v)$ is the component of $T(v)-u$ containing $v$.

If there does exist a vertex $u$ in $T(v)$ labeled from $Y$ such that $T_X(v)$ is the component of $T(v)-u$ containing $v$, then $u$ had eccentricity at least $d-j$ in $T(u)$, so $T_X(v)$ is isomorphic to a subgraph of $T^*_{k-1,j-1}$ since $T_{k,d}$ has diameter $d$.  Hence Case I applies.  If there exists no vertex $u$ in $T(v)$ labeled from $Y$ such that $T_X(v)$ is the component of $T(v)-u$ containing $v$, then Case II applies.  
\end{proof}
We now show that Algorithm \ref{Rankcomplete} produces a valid label in each of the three cases.  Assume that the algorithm has assigned valid labels before the presentation of $v$.  Note that for $(A,B)\in\{(X,X),(Y,X\cup Y),(Z,X\cup Y\cup Z)\}$, each vertex $u\in V(T_B(v))$ labeled from $A$ was given label $f^A_B(u)$ when it arrived, and $\min A>\max (B-A)$.  Hence by Lemma \ref{exist}, $f^A_B(v)$ exists if every component of $T_B(v)-v$ lacks some label in $A$.  
\begin{proposition}
In Case I, $f^X_X(v)$ exists, and setting $f(v)=f^X_X(v)$ is a valid move for Ranker.
\end{proposition}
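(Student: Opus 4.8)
The plan is to derive both conclusions directly from Lemmas \ref{exist} and \ref{valid} applied with $(A,B)=(X,X)$, since the hypotheses about labels---that every vertex of $T_X(v)$ labeled from $X$ received $f^X_X$ on arrival, and that $\min X>\max(X-X)$ (vacuously, as $X-X=\emptyset$)---are already recorded in the remark preceding this proposition.

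For existence, I would invoke Lemma \ref{exist}, whose only remaining hypothesis is that every component of $T_X(v)-v$ lacks some label in $X$. Here I use Case I condition (1): $T_X(v)$ is isomorphic to a subgraph of $T^*_{k-1,j-1}$, so $|V(T_X(v))|\leq|V(T^*_{k-1,j-1})|=|X|$. Deleting $v$ leaves each component with at most $|X|-1$ vertices, hence using at most $|X|-1$ distinct labels; since all of those labels lie in $X$ by the definition of $T_X(v)$, each component omits at least one element of $X$. Thus $f^X_X(v)$ exists.

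For validity, I would apply Lemma \ref{valid}, again with $(A,B)=(X,X)$, so that $\max(A\cup B)=\max X$. Case I condition (2) splits into two situations. If $T_X(v)=T(v)$, the first alternative of Lemma \ref{valid} holds outright. Otherwise there is a vertex $u$ labeled from $Y$ such that $T_X(v)$ is the component of $T(v)-u$ containing $v$. Because $T(v)$ is a tree and $T_X(v)$ is an entire component of $T(v)-u$, the only vertex of $T(v)-V(T_X(v))$ having a neighbor in $T_X(v)$ is $u$ itself: any other vertex outside $T_X(v)$ lies in a different component of $T(v)-u$ and so is separated from $T_X(v)$ by $u$. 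A single such vertex trivially lies in one component of $T(v)-v$, and $f(u)\in Y$ gives $f(u)>\max X$ since the segment $Y$ lies entirely above $X$. Thus the second alternative of Lemma \ref{valid} is met, and $f(v)=f^X_X(v)$ is a valid move.

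The main point requiring care is the cardinality count in the existence step, where the embedding of $T_X(v)$ into $T^*_{k-1,j-1}$ together with the choice $|X|=|V(T^*_{k-1,j-1})|$ forces each component of $T_X(v)-v$ to omit a label; the only structural subtlety is the uniqueness of the external neighbor $u$, which rests entirely on $T_X(v)$ being a full component of $T(v)-u$ in a tree. With these two observations in hand, the proposition follows from the two lemmas.
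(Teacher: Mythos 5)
Your proof is correct and follows essentially the same route as the paper's: existence via Lemma \ref{exist} using the cardinality bound $|V(T_X(v))|\leq|V(T^*_{k-1,j-1})|=|X|$, and validity via Lemma \ref{valid} by noting that in the second alternative of Case I condition (2), the vertex $u$ labeled from $Y$ is the unique vertex outside $T_X(v)$ with a neighbor inside it and satisfies $f(u)>\max X$. Your write-up merely spells out the counting and uniqueness details that the paper states tersely.
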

\begin{proof}
Note that $f^X_X(v)$ exists by Lemma \ref{exist} because $|V(T_X(v))|\leq |X|$.  Furthermore, $f^X_X(v)$ provides a valid label for $v$ by Lemma \ref{valid} because either $T_X(v)=T(v)$ or there exists a vertex $u$ in $T(v)$ such that $f(u)>\max X$ and $T_X(v)$ is a component of $T(v)-u$, making $u$ the only vertex outside $T_X(v)$ neighboring a vertex inside $T_X(v)$.
\end{proof}
If $y$ satisfies the conditions of Case II, then let $H(y)$ be the component of $T(y)-y$ having greatest diameter.
\begin{lemma}\label{separate}
If $y$ is labeled from $Y$, then each vertex separated from $H(y)$ by $y$ (at any point in the game) is labeled from $X$.
\end{lemma}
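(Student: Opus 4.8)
The plan is to combine a distance estimate that pins down the geometry around $y$ with an induction on the order in which vertices arrive.

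First I would record the geometry at the moment $y$ is presented. Since $y$ is labeled from $Y$ it satisfies the conditions of Case II, so its eccentricity in $T(y)$ is at least $d-j$; let $z$ be a vertex of $T(y)$ with $d(y,z)\ge d-j$, and let $C$ be the component of $T(y)-y$ containing $z$. For any vertex $w$ lying in a component of $T(y)-y$ other than $C$, the path from $w$ to $z$ passes through $y$, so $d(w,y)=d(w,z)-d(y,z)\le d-(d-j)=j$ because $T_{k,d}$ has diameter $d$. Hence every component of $T(y)-y$ except $C$ has all its vertices within distance $j$ of $y$ and so has diameter at most $2j-2$, whereas $C$ has diameter at least $d-j-1\ge 2j-1$ (using $d-j\ge 2j$). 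Thus $C$ is the unique component of greatest diameter, i.e.\ $C=H(y)$ and $z\in H(y)$. The same computation applies at any later time: if $w$ is separated from $H(y)$ by $y$ then $y$ lies on the $w$--$z$ path, so again $d(w,y)\le j$. Consequently every vertex ever separated from $H(y)$ by $y$ lies within distance $j$ of $y$, hence within distance $j-1$ of the neighbor of $y$ through which it attaches, so the branch at $y$ containing it embeds in $T^*_{k-1,j-1}$.

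Next I would prove the labeling claim by strong induction on arrival time over the set $S$ of vertices that are separated from $H(y)$ by $y$ at some point. Fix $w\in S$ and examine $T(w)$ when $w$ is presented. Since $T_X(w)$ cannot contain the $Y$-labeled vertex $y$, it lies in the branch of $y$ containing $w$ and is therefore, by the previous paragraph, isomorphic to a subgraph of $T^*_{k-1,j-1}$, giving condition (1) of Case I. For condition (2) I split on whether $y\in T(w)$. If $y\in T(w)$, let $K$ be the component of $T(w)-y$ containing $w$; every other vertex of $K$ arrived before $w$ and lies in $S$ (it is separated from $H(y)$ by $y$ at time $w$, as $H(y)$ is already attached to $y$), so by induction $K$ is labeled from $X$ apart from $w$, forcing $T_X(w)=K$. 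As $y$ is the only vertex outside $K$ adjacent to $K$, condition (2) holds with the $Y$-labeled cut vertex $u=y$. If instead $y\notin T(w)$, then every vertex of $T(w)$ other than $w$ arrived earlier and lies in $S$, since in the final tree it remains in $w$'s component while $y$ separates that component from $H(y)$; by induction these are all labeled from $X$, so $T_X(w)=T(w)$ and condition (2) holds through its first alternative. In either case Case I applies, so $w$ receives a label from $X$.

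The main obstacle is the temporal bookkeeping in the inductive step: one must verify that the vertices already present in $T(w)$ are themselves members of $S$, so that the induction hypothesis is available, and that the only non-$X$ neighbor of the shallow $X$-subtree is the single $Y$-vertex $y$ rather than some higher-labeled vertex. Both points reduce to the fact that, in a tree, the branch at $y$ containing $w$ meets the rest of the graph only through $y$, together with the distance bound $d(w,y)\le j$ established above; making these connectivity arguments precise across the various possible arrival orders of $w$, $y$, and the intermediate vertices is where the care is needed.
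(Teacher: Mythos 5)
Your proof is correct and follows essentially the same route as the paper's: the same distance computation showing that every branch at $y$ other than $H(y)$ is shallow enough to embed in $T^*_{k-1,j-1}$, followed by induction on arrival order to conclude that each vertex separated from $H(y)$ by $y$ falls into Case I of the algorithm and hence receives a label from $X$. Your write-up is actually more explicit than the paper's, which leaves the induction implicit and only spells out the cases of vertices presented before $y$ inside $T(y)$ and vertices presented after $y$ with $y\in V(T(s))$.
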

\begin{proof}
The eccentricity of $y$ in $T(y)$ is at least $d-j$, so $H(y)$ has diameter at least $d-j-1$.  This forces each other component of $T(y)-y$ to be isomorphic to a subtree of $T^*_{k-1,j-1}$.  Any vertex $r$ of such a component is labeled from $X$, since $T(r)$ was isomorphic to a subgraph of $T^*_{k-1,j-1}$, implying $T_X(r)=T(r)$.  Furthermore, any subsequently presented vertex $s$ satisfying $y\in V(T(s))$ that is separated from $H(y)$ by $y$ is labeled from $X$, since $T_X(s)$ is isomorphic to a subgraph of $T^*_{k-1,j-1}$ and is the component of $T(s)-y$ containing $s$.
\end{proof}
\begin{lemma}\label{yyy}
Every path in $T_{X\cup Y}(v)$ contains at most two vertices labeled from $Y$ (including possibly $v$).  
\end{lemma}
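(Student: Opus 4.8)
The plan is to argue by contradiction: suppose some path $P$ in $T_{X\cup Y}(v)$ contains three vertices labeled from $Y$, and index three of them as $y_1,y_2,y_3$ in the order they occur along $P$, so that the middle vertex $y_2$ has $y_1$ and $y_3$ on opposite sides of it along $P$. The whole argument will pivot on $y_2$, invoking Lemma \ref{separate}. The point is that $y_2$ is labeled from $Y$, so when it was placed it satisfied the Case~II conditions; in particular its eccentricity in $T(y_2)$ is at least $d-j$, which is exactly what makes $H(y_2)$ a well-defined single large-diameter component and lets Lemma \ref{separate} apply to $y_2$.

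Next I would translate "opposite sides along $P$" into the separation language of Lemma \ref{separate}. Since $T(v)$ is a tree, deleting $y_2$ from the current component leaves several subtrees, and $y_1,y_3$ lie in two \emph{distinct} ones because the unique $y_1,y_3$-path runs through $y_2$. The subtree $H(y_2)$ is connected and so lies entirely within a single one of these components, namely the one containing the neighbor of $y_2$ in $H(y_2)$. Hence at most one of $y_1,y_3$ can lie on the $H(y_2)$-side, and the other is separated from $H(y_2)$ by $y_2$. By Lemma \ref{separate} that vertex must be labeled from $X$, contradicting its being labeled from $Y$. This already rules out three $Y$-labels on a path, giving the bound of two.

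The step I expect to be the main obstacle is handling the case in which the middle vertex $y_2$ is the currently presented vertex $v$ itself, since Lemma \ref{separate} is phrased for a vertex "labeled from $Y$" while $v$ is only being labeled now. I would dispatch this by observing that counting $v$ among the $Y$-vertices means Ranker is applying Case~II to $v$, so $v$ meets the Case~II hypotheses (eccentricity at least $d-j$ in $T(v)$); the proof of Lemma \ref{separate} then goes through verbatim with $y=v$, because $y_1$ and $y_3$ were presented earlier and therefore already lie in $T(v)-v$, so they are covered by the first part of that proof. With this observation the pivot argument is uniform in whether or not $y_2=v$, and no separate geometric case analysis comparing the reaches of $H(y_1)$ and $H(y_3)$ is needed.
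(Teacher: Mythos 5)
Your proposal is correct and is essentially the paper's own argument: both proofs pivot on a $Y$-labeled vertex of the hypothetical path and apply Lemma \ref{separate} to $H(\cdot)$, the paper concluding that the other two $Y$-vertices both lie on the $H(y)$-side (so $y$ cannot lie between them), while you equivalently observe that the two endpoints lie in different components of the tree minus the middle vertex, so one would be separated from $H(y_2)$ and hence forced into $X$. Your explicit handling of the case $y_2=v$ via the Case II hypotheses is a subtlety the paper covers only with the parenthetical ``one could possibly be $v$,'' so that part of your write-up is, if anything, more careful than the original.
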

\begin{proof}
Let $y$, $y'$, and $y''$ be distinct vertices in $T_{X\cup Y}(v)$ labeled from $Y$ (one could possibly be $v$).  Since $y'$ and $y''$ are labeled from $Y$, neither is separated from $H(y)$ by $y$, by Lemma \ref{separate}.  If $u$ is the neighbor of $y$ in $H(y)$, then the edge $uy$ must be part of any path containing $y$ and at least one of $y'$ or $y''$.  Hence edge-disjoint $y',y$- and $y,y''$-paths do not exist, so no path contains $y$ between $y'$ and $y''$.  By symmetry, no path contains each of $y$, $y'$, and $y''$.
\end{proof}
\begin{lemma}\label{zyy}
If $T(v)$ contains a vertex labeled from $Y$ (possibly $v$), then $T(v)$ contains a vertex labeled from $Z$, and no path in $T(v)$ contains a vertex labeled from $Z$ and multiple vertices of $T_{X\cup Y}(v)$ labeled from $Y$.
\end{lemma}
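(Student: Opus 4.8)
The plan is to prove the two assertions of Lemma \ref{zyy} separately, after recording one structural tool. Throughout I use the fact, immediate by induction on the order of presentation, that every maximal connected set of vertices labeled from $X$ forms a subtree isomorphic to a subgraph of $T^*_{k-1,j-1}$: when a vertex $u$ receives an $X$-label it does so by Case I, at which moment $T_X(u)$ is exactly the $X$-component through $u$ and condition (1) of Case I forces it to embed in $T^*_{k-1,j-1}$. Since $T^*_{k-1,j-1}$ has radius $j-1$, any path whose vertices are all labeled from $X$ therefore has at most $2j-1$ vertices.

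For the first assertion I argue by induction on the order in which $Y$-labels are created that, whenever a vertex $y$ is labeled from $Y$, the component $T(y)$ present at that instant already contains a $Z$-label. As $y$ satisfies Case II, its eccentricity in $T(y)$ is at least $d-j\geq 2j$, so the neighbor $u$ of $y$ into $H(y)$ is joined inside $T(y)-y$ to a vertex at distance at least $d-j-1$; this yields a path $Q$ on at least $2j$ vertices, none equal to $y$. By the tool above $Q$ cannot lie in a single $X$-component, so $Q$ carries a label from $Y$ or $Z$. If $y$ is the first $Y$-label ever created then $Q$ has no $Y$-label and hence a $Z$-label, giving the base case; otherwise a $Y$-label on $Q$ predates $y$, and the inductive hypothesis places a $Z$-label in its (then smaller, hence now contained) component, so this $Z$-label lies in $T(y)$ as well. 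Since components only grow, any $Y$-vertex of the current $T(v)$ carries a $Z$-vertex along with it, which is the first assertion.

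For the second assertion, suppose some path $P$ in $T(v)$ contains a vertex $z$ labeled from $Z$ together with two vertices $y_1,y_2$ of $T_{X\cup Y}(v)$ labeled from $Y$. By Lemma \ref{separate} applied to $y_1$, every non-$X$ vertex of $T(v)$—in particular $z$ and $y_2$—lies on the $H(y_1)$-side of $y_1$; symmetrically $z$ and $y_1$ lie on the $H(y_2)$-side of $y_2$. If $y_1$ were the middle one of $y_1,y_2,z$ on $P$, then $z$ and $y_2$ would lie on opposite sides of $y_1$, contradicting that both lie on the $H(y_1)$-side; likewise $y_2$ cannot be the middle vertex. Hence $z$ is the middle vertex, so $z$ lies on the $y_1,y_2$-path, which is contained in the connected subtree $T_{X\cup Y}(v)$. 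Since every vertex of $T_{X\cup Y}(v)$ other than $v$ is labeled from $X\cup Y$, the only possibility is $z=v$; thus $v$ was labeled by Case III and lies strictly between $y_1$ and $y_2$.

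It remains to rule out this configuration, and this is where the genuine difficulty lies. Here one exploits that $y_1,y_2$ were \emph{legally} given $Y$-labels: at their respective presentations each had eccentricity at least $d-j$, witnessed by vertices $w_1\in H(y_1)$ and $w_2\in H(y_2)$ with $\mathrm{dist}(y_i,w_i)\geq d-j$. By Lemma \ref{yyy} the $y_1,y_2$-path carries no $Y$-label beyond $y_1,y_2$, so its two halves have all-$X$ interiors and, being single $X$-paths, length at most $2j$; and because $v$ is presented last it subdivides none of the earlier paths, so $w_1$ and $w_2$ lie on opposite sides of $v$. The target is to show that one of $\mathrm{dist}(v,w_1)$, $\mathrm{dist}(v,w_2)$ is at least $d-j$, contradicting the eccentricity bound forced by Case III. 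I expect this estimate to be the main obstacle: a bare triangle inequality only produces distances of order $j$, so the argument must control \emph{where} the deep certificate branching toward $w_i$ separates from the bounded-length $X$-connector running to $v$, and this appears to depend on the dynamics of the game—the order of presentation and the Case II admissibility of $y_1,y_2$—rather than on the static metric of $T(v)$ alone. Reconciling the deep inward certificates of $y_1$ and $y_2$ with the small eccentricity required of $v$ is thus the crux of the lemma.
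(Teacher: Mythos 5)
Your proofs of the two assertions are sound as far as they go, and they essentially follow the paper's route. For the first assertion, both you and the paper rest on the fact that a connected set of $X$-labeled vertices must embed in $T^*_{k-1,j-1}$ (via condition (1) of Case I) and hence cannot contain a path on $2j$ vertices; the only difference is that you run an induction over all $Y$-labeled vertices in order of presentation, whereas the paper avoids induction by taking the \emph{first} vertex $y$ of $T(v)$ labeled from $Y$ and locating a non-$X$ vertex inside $H(y)$ — since everything in $H(y)$ predates $y$, that vertex cannot be labeled from $Y$ and so is labeled from $Z$. For the second assertion, your use of Lemma \ref{separate} to rule out $y_1$ or $y_2$ as the middle vertex, and of the connectivity of $T_{X\cup Y}(v)$ to rule out $z$ as the middle vertex, is exactly the paper's argument (the paper phrases the latter step as: every path from $z$ to a vertex of $T_{X\cup Y}(v)$ must leave $z$ along the single edge toward $v$, so edge-disjoint $y_1,z$- and $z,y_2$-paths cannot exist).

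The problem is your final paragraph. The case $z=v$ that you call ``the crux of the lemma'' and leave unresolved is vacuous, so your proof was already complete one paragraph earlier. In this lemma, as in Lemmas \ref{separate} and \ref{yyy}, $v$ is the vertex currently being presented, and the parenthetical ``(possibly $v$)'' signals the only label $v$ may be regarded as carrying here: one from $Y$. The lemma is invoked exactly once, in verifying that Case II yields a valid move, where $v$ is being assigned $f^Y_{X\cup Y}(v)\in Y$; consequently every vertex labeled from $Z$ was presented strictly before $v$, and $z\neq v$. The paper's own proof presupposes this silently — it speaks of ``the neighbor of $z$ in the direction of $v$,'' which is meaningful only when $z\neq v$. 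A state of the game in which $v$ itself receives a $Z$-label (Case III) is never one to which this lemma is applied; the validity of Case III is established separately, via Lemmas \ref{z} and \ref{largest}. So what was missing at the end of your argument was not a delicate estimate reconciling the ``deep certificates'' of $y_1$ and $y_2$ with the eccentricity of $v$, but only the observation that $z\in T_{X\cup Y}(v)$ is outright impossible: every vertex of $T_{X\cup Y}(v)$ other than $v$ is labeled from $X\cup Y$, and in the situation the lemma addresses $v$ is not labeled from $Z$.
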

\begin{proof}
For the first claim, let $y$ be the first vertex in $T(v)$ labeled from $Y$.  The diameter of $H(y)$ is greater than the diameter of $T^*_{k-1,j-1}$ because $d-j-1>2j-2$, so some vertex $r\in V(H(y))$ violated the first condition of Case I when presented and was thus not labeled from $X$.  Since $r$ was presented before $y$, it is labeled from $Z$.

For the second claim, let $z$ be a vertex of $T(v)$ labeled from $Z$, and $y'$ and $y''$ be distinct vertices of $T_{X\cup Y}(v)$ labeled from $Y$.  If $u$ is the neighbor of $z$ in the direction of $v$, then the edge $uz$ must be part of any path containing $z$ and at least one of $y'$ or $y''$.  Hence edge-disjoint $y',z$- and $z,y''$-paths do not exist, so no path can contain $z$ between $y'$ and $y''$.

By Lemma \ref{separate}, any vertex separated from $H(y')$ by $y'$ is labeled from $X$, so $y''$ is not separated from $z$ by $y'$.  Similarly, $y'$ is not separated from $z$ by $y''$.  Thus no path can contain each of $z$, $y'$, and $y''$.
\end{proof}
\begin{figure}[ht]
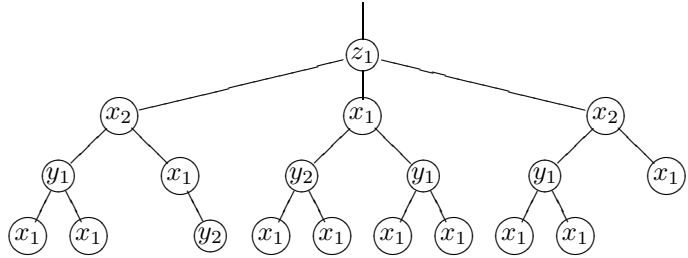

\centering
\[ \xygraph{
!{<0cm,0cm>;<1.6cm,0cm>:<0cm,.8cm>::}
!{(2.75,4) }{}="w"
!{(2.75,3) }*+[o][F-:<3pt>]{z_1}="z"
!{(.75,2) }*+[o][F-:<3pt>]{x_2}="a"
!{(2.75,2) }*+[o][F-:<3pt>]{x_1}="b"
!{(.25,1) }*+[o][F-:<3pt>]{y_1}="c"
!{(1.25,1) }*+[o][F-:<3pt>]{x_1}="d"
!{(2.25,1) }*+[o][F-:<3pt>]{y_2}="e"
!{(3.25,1) }*+[o][F-:<3pt>]{y_1}="f"
!{(0,0) }*+[o][F-:<3pt>]{x_1}="g"
!{(.5,0) }*+[o][F-:<3pt>]{x_1}="h"
!{(1.5,0) }*+[o][F-:<3pt>]{y_2}="j"
!{(2,0) }*+[o][F-:<3pt>]{x_1}="k"
!{(2.5,0) }*+[o][F-:<3pt>]{x_1}="l"
!{(3,0) }*+[o][F-:<3pt>]{x_1}="m"
!{(3.5,0) }*+[o][F-:<3pt>]{x_1}="n"
!{(4.75,2) }*+[o][F-:<3pt>]{x_2}="o"
!{(4.25,1) }*+[o][F-:<3pt>]{y_1}="p"
!{(5.25,1) }*+[o][F-:<3pt>]{x_1}="q"
!{(4,0) }*+[o][F-:<3pt>]{x_1}="r"
!{(4.5,0) }*+[o][F-:<3pt>]{x_1}="s"
"b"-"z"-"a" "w"-"z"-"o"
"g"-"c"-"a"
"b"-"f"-"n" "a"-"d"-"j"
"c"-"h" "d"
"b"-"e"-"k"
"e"-"l" "f"-"m"
"r"-"p"-"o"-"q"
"p"-"s" "q"
} \]
\caption{A labeling of $T_{k,d}$ for vertices with high eccentricity ($x_i\in X,y_i\in Y,z_i\in Z$).}
\end{figure}
\begin{proposition}
In Case II, $f^Y_{X\cup Y}(v)$ exists, and setting $f(v)=f^Y_{X\cup Y}(v)$ is a valid move for Ranker.
\end{proposition}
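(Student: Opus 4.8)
The plan is to verify the hypotheses of Lemma~\ref{exist} and Lemma~\ref{valid} for the pair $(A,B)=(Y,X\cup Y)$, using the structural Lemmas~\ref{separate}, \ref{yyy}, and~\ref{zyy} to locate the $Y$- and $Z$-labeled vertices relative to $v$. The single fact doing most of the work is that, since $v$ has eccentricity at least $d-j$ in $T(v)$ (condition~(1) of Case~II) and we are assigning it a label from $Y$, Lemma~\ref{separate} applies to $v$: writing $H(v)$ for the greatest-diameter component of $T(v)-v$, every vertex separated from $H(v)$ by $v$ is labeled from $X$. Thus everything outside $X$ is confined to the single branch $H(v)$.

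Granting existence for the moment, I would establish validity first, as it is the cleaner half. By maximality of $T_{X\cup Y}(v)$, any vertex of $T(v)-V(T_{X\cup Y}(v))$ adjacent to $T_{X\cup Y}(v)$ must be labeled from $Z$ (otherwise it would itself lie in $T_{X\cup Y}(v)$), so the label condition of Lemma~\ref{valid}, that such vertices exceed $\max(X\cup Y)$, holds automatically. For the ``same component'' condition, since $Z$-labels are not $X$-labels, the consequence of Lemma~\ref{separate} noted above forces every $Z$-vertex of $T(v)$ to lie in $H(v)$, a single component of $T(v)-v$. Finally the first assertion of Lemma~\ref{zyy} guarantees that a $Z$-vertex exists (as $T(v)$ contains the $Y$-candidate $v$), so $T(v)\neq T_{X\cup Y}(v)$ and we are genuinely in the second clause of Lemma~\ref{valid}, which then certifies the move.

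For existence I would appeal to Lemma~\ref{exist}: as noted just before the proposition, the greedy-labeling hypothesis and $\min Y>\max X$ both hold, so it remains only to show that every component of $T_{X\cup Y}(v)-v$ omits some label of $Y$. Every component separated from $H(v)$ by $v$ is entirely $X$-labeled by Lemma~\ref{separate} and hence omits all of $Y$, so the whole question reduces to the lone component $C$ of $T_{X\cup Y}(v)-v$ meeting $H(v)$. Here I would use the ``prefix'' observation from the proof of Lemma~\ref{exist}, that the distinct $Y$-labels appearing on $C$ are exactly the $q$ smallest, $\{y_1,\dots,y_q\}$ for some $q$, and try to bound $q$ strictly below $|Y|=(k-1)^{j}$.

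This last bound is where I expect the real difficulty to lie. By Lemma~\ref{yyy} no path of $T_{X\cup Y}(v)$ carries three vertices of $Y$, so the $Y$-vertices of $C$ never have one lying between two others; in a tree this permits them to sit as the leaves of a depth-$j$, degree-$(k-1)$ configuration, i.e.\ a priori as many as $(k-1)^{j}$ of them, exactly matching $|Y|$. Thus the inequality $q<|Y|$ is tight, and a naive count does not suffice. The plan is to extract the missing strict inequality from condition~(2) of Case~II: the absence of any single $Y$-vertex $u$ for which $T_X(v)$ is the component of $T(v)-u$ containing $v$ should preclude the ``saturated'' configuration in which $C$ realizes every label of $Y$, forcing $q<(k-1)^{j}$. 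Pinning this down — tracking, via the prefix structure together with Lemmas~\ref{separate} and~\ref{zyy}, exactly how many distinct $Y$-labels can accumulate in one component before condition~(2) is violated — is the crux of the argument; the rest is bookkeeping with the three structural lemmas already in hand.
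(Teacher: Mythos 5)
Your validity half is fine and matches the paper: maximality of $T_{X\cup Y}(v)$ forces every outside neighbor of $T_{X\cup Y}(v)$ to carry a $Z$-label, and the reasoning of Lemma \ref{separate}, applied to $v$ (which has eccentricity at least $d-j$ in $T(v)$), confines all such vertices to the single component $H(v)$ of $T(v)-v$, so Lemma \ref{valid} applies. (Your extra step invoking Lemma \ref{zyy} to show $T(v)\neq T_{X\cup Y}(v)$ is unnecessary: if $T(v)=T_{X\cup Y}(v)$, the first alternative of Lemma \ref{valid} already covers the move.)

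The existence half has a genuine gap, and you have located it yourself: you never prove that the component $C$ of $T_{X\cup Y}(v)-v$ meeting $H(v)$ misses a label of $Y$, and your stated plan — extracting the needed strict inequality from condition (2) of Case II — is misdirected. Condition (2) is a local statement about the neighborhood of $v$ (it is merely what separates Case II from Case I) and says nothing about how many $Y$-labels can accumulate deep inside $H(v)$. The paper's mechanism is different, and it is exactly the tool you set aside. Let $S$ consist of $v$ together with all $Y$-labeled vertices of $T_{X\cup Y}(v)$, and let $T$ be the smallest subtree of $T_{X\cup Y}(v)$ containing $S$. By Lemma \ref{yyy} no element of $S$ is internal in $T$, so the internal vertices of $T$ form a connected, entirely $X$-labeled subtree $T'$, which by condition (1) of Case I (applied to its last-presented vertex) is isomorphic to a subtree of $T^*_{k-1,j-1}$. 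Counting attachment slots, $|S|\leq\sum_{u\in V(T')}\left(k-\deg_{T'}(u)\right)=(k-2)|V(T')|+2\leq (k-1)^j+1=|Y|+1$, which is one too many — precisely the slack you observed. The missing unit comes from Lemma \ref{zyy}: $T(v)$ contains a $Z$-labeled vertex $z$, and since no path may contain $z$ together with two $Y$-labeled vertices of $T_{X\cup Y}(v)$, some neighbor of $T'$ (the one in the direction of $z$) is not in $S$; that occupied slot yields $|S|\leq |Y|$, hence at most $|Y|-1$ distinct $Y$-labels in each component of $T_{X\cup Y}(v)-v$, which is what Lemma \ref{exist} requires. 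So the counting is not mere ``bookkeeping'' deferred to condition (2); it is a Steiner-tree argument powered by Lemma \ref{zyy}, and without it your proof does not close.
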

\begin{proof}
Let $S$ be the set consisting of $v$ and every vertex in $T_{X\cup Y}(v)$ labeled from $Y$.  By Lemma \ref{yyy}, the elements of $S$ are only separated by vertices labeled from $X$, so the smallest subtree $T$ of $T_{X\cup Y}(v)$ containing all of $S$ has all its internal vertices labeled from $X$.  Therefore the set of internal vertices of $T$ induces a tree $T'$ isomorphic to a subtree of $T^*_{k-1,j-1}$.  By Lemma \ref{zyy}, some vertex not labeled from $Y$ neighbors a vertex in $T'$ if $T'\neq\emptyset$, or else some path in $T(v)$ contains a vertex labeled from $Z$ and multiple vertices of $T_{X\cup Y}(v)$ labeled from $Y$.  Thus $|S|=|V(T)|-|V(T')|\leq |V(T^*_{k-1,j})|-|V(T^*_{k-1,j-1})|=|Y|$, so $f^Y_{X\cup Y}(v)$ exists by Lemma \ref{exist}.  

Finally, the only vertices outside $T_{X\cup Y}(v)$ that neighbor a vertex inside $T_{X\cup Y}(v)$ are in $H(y)$ and labeled from $Z$.  Hence $f^Y_{X\cup Y}(v)$ provides a valid label for $v$, by Lemma \ref{valid}.
\end{proof}
\begin{lemma}\label{z}
If $v$ is assigned a label $m\in Z$ previously unused in $T(v)$, then $v$ is a leaf of some subtree of $T_{X\cup Z}(v)$ containing every label in $Z$ smaller than $m$.
\end{lemma}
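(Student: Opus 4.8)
The plan is to argue by induction on the position of $m$ within $Z$. First I would invoke the initial-segment analysis carried out inside the proof of Lemma \ref{exist}: since $v$ is labeled $m=f^Z_{X\cup Y\cup Z}(v)$ and $m$ is previously unused, the $Z$-labels already present in $T(v)$ form an initial segment $z_1<\cdots<z_q$ of $Z$ with $m=z_{q+1}$, and (because all of $X$ and $Y$ lie below $Z$) every label occurring in $T(v)-v$ is at most $z_q$. When $q=0$ the lone vertex $v$ is the required subtree, so I may assume $q\geq 1$ and that the statement holds for every earlier assignment of a \emph{fresh} $Z$-label.

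Next I would locate a vertex $w\in V(T(v))$ with $f(w)=z_q$ and verify that $w$ received $z_q$ as a fresh label. If instead $w$ had reused $z_q$, then at the moment $w$ was presented its component already contained a vertex labeled $z_q$, and validity of that move would require some vertex labeled above $z_q$ on the path joining the two occurrences; such a vertex predates $v$, persists, and still lies in the connected tree $T(v)$, contradicting that $z_q$ is the largest label in $T(v)-v$. Applying the induction hypothesis to $w$ at the time of its presentation then yields a subtree $S'\subseteq T_{X\cup Z}(w)$ having $w$ as a leaf and containing every $Z$-label below $z_q$; since $w$ itself carries $z_q$, the subtree $S'$ contains a vertex of each label $z_1,\ldots,z_q$. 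As labels never change, $S'$ is still an $(X\cup Z)$-labeled subtree at the current stage.

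It remains to splice $v$ onto $S'$ without leaving $T_{X\cup Z}(v)$, and the crux is to show that the unique $v,w$-path $P$ in $T(v)$ contains no vertex labeled from $Y$. Suppose some $Y$-labeled vertex $y$ lay on $P$. By Lemma \ref{separate}, every vertex separated from $H(y)$ by $y$ is labeled from $X$; since both $v$ and $w$ are labeled from $Z$, neither is separated from $H(y)$ by $y$, so both endpoints lie in the component of $T(v)-y$ containing $H(y)$, whence $P$ avoids $y$ --- a contradiction. Thus $P$ lies in $T_{X\cup Z}(v)$, and the union $S=S'\cup P$ is a subtree of $T_{X\cup Z}(v)$ in which $v$ has degree one; as $S\supseteq S'$, it contains a vertex of every $Z$-label smaller than $m$, completing the induction.

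I expect the main obstacle to be the $Y$-avoidance step, since that is exactly what confines the constructed subtree to $T_{X\cup Z}(v)$; the freshness check for $w$ is the secondary point requiring care, as it both licenses the induction hypothesis and guarantees that distinct occurrences of $z_q$ in $T(v)$ sit in different branches from $v$, so that selecting a single $w$ suffices to make $v$ a leaf.
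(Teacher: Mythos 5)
Your proof is correct and takes essentially the same route as the paper's: induction on the label, locating a vertex $w$ carrying the predecessor label $z_q=m-1$, applying the induction hypothesis to the moment $w$ was presented, and using Lemma \ref{separate} to show the $v,w$-path avoids $Y$-labeled vertices so that the spliced tree stays inside $T_{X\cup Z}(v)$. The only cosmetic difference is that you certify freshness of $w$ via validity of the earlier ranking, whereas the paper gets it for free by taking the \emph{first} vertex of $T_{X\cup Z}(v)$ labeled $m-1$ (any earlier occurrence of $m-1$ would also lie in $T_{X\cup Z}(v)$).
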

\begin{proof}
By Lemma \ref{separate}, two vertices labeled from $Z$ are never separated by a vertex labeled from $Y$, so all vertices in $T(v)-v$ labeled from $Z$ lie in $T_{X\cup Z}(v)$.  We use induction on $m$, with the base case $m=\min Z$ being trivial.  If $m>\min Z$, let $u$ be the first vertex in $T_{X\cup Z}(v)$ labeled with $m-1$.  Since $u$ arrived as a leaf of some subtree containing every label in $Z$ smaller than $m-1$, adding to that tree the $u,v$-path through $T_{X\cup Z}(v)$ yields the desired tree.
\end{proof}
\begin{lemma}\label{largest}
The largest subtree $T$ of $T_{k,d}$ having diameter $d-j-1$ has at most $2|V(T^*_{k-1,j})|$ vertices.
\end{lemma}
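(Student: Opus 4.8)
The plan is to reduce the lemma to a bound on $|V(T_{k,D})|$, where I set $D=d-j-1$, and then to evaluate $|V(T_{k,D})|$ by splitting on the parity of $D$ using the decomposition of $T_{k,D}$ into rooted trees of the form $T^*_{k-1,r}$.

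First I would observe that any subtree $T$ of $T_{k,d}$ has maximum degree at most $k$, since it is a subgraph of $T_{k,d}$, whose maximum degree is $k$. The introduction records that the family of trees with maximum degree at most $k$ and diameter at most $D$ is precisely the set of connected induced subgraphs of $T_{k,D}$; hence any $T$ with diameter $d-j-1$ embeds in $T_{k,D}$ and satisfies $|V(T)|\le |V(T_{k,D})|$. It therefore suffices to prove $|V(T_{k,D})|\le 2|V(T^*_{k-1,j})|$. Note that in the relevant regime $j=\lfloor d/3\rfloor\ge 1$, so $T^*_{k-1,j-1}$ is defined.

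To evaluate $|V(T_{k,D})|$ I would write $d=3j+t$ with $t\in\{0,1,2\}$, which is legitimate as $j=\lfloor d/3\rfloor$, giving $D=2j+t-1$. Recalling from the remarks after Theorem \ref{lowerbound} that $T_{k,2r}$ is a copy of $T^*_{k-1,r}$ joined at its root to a copy of $T^*_{k-1,r-1}$, while $T_{k,2r+1}$ is two copies of $T^*_{k-1,r}$ joined at their roots, I would consider three cases. When $t=0$, $D=2j-1$ is odd and $T_{k,D}$ is two copies of $T^*_{k-1,j-1}$; when $t=1$, $D=2j$ is even and $T_{k,D}$ is $T^*_{k-1,j}$ joined to $T^*_{k-1,j-1}$; when $t=2$, $D=2j+1$ is odd and $T_{k,D}$ is two copies of $T^*_{k-1,j}$. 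Since $|V(T^*_{k-1,j-1})|\le |V(T^*_{k-1,j})|$, each case yields $|V(T_{k,D})|\le 2|V(T^*_{k-1,j})|$, with equality exactly when $t=2$.

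There is no genuinely hard step here; the only points requiring care are keeping the floor arithmetic and the parity of $D$ consistent across the three residues of $d$ modulo $3$, and making sure the reduction to $|V(T_{k,D})|$ is correctly justified by the containment of every diameter-$D$, maximum-degree-at-most-$k$ tree inside $T_{k,D}$.
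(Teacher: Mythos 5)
Your proof is correct, but it takes a different route from the paper's. The paper splits $T$ itself: it takes the central edge of $T$ (or an edge through the central vertex when $d-j-1$ is even), deletes it to obtain two rooted trees $T_1,T_2$ whose roots have degree at most $k-1$ and eccentricity at most $\lfloor (d-j-1)/2\rfloor$, checks that $\lfloor (d-j-1)/2\rfloor\le j$ for $j=\lfloor d/3\rfloor$, and concludes that each $T_i$ embeds in $T^*_{k-1,j}$, giving $|V(T)|=|V(T_1)|+|V(T_2)|\le 2|V(T^*_{k-1,j})|$. You instead first embed $T$ into the extremal tree $T_{k,D}$ with $D=d-j-1$, invoking the containment characterization stated in the introduction (trees of maximum degree at most $k$ and diameter at most $D$ are exactly the connected induced subgraphs of $T_{k,D}$), and then read off $|V(T_{k,D})|$ from the paper's stated decomposition of $T_{k,2r}$ and $T_{k,2r+1}$ into two rooted trees joined at their roots, with a case analysis on $d\bmod 3$; your floor arithmetic ($D=2j+t-1$, $t\in\{0,1,2\}$) is correct in all three cases, and matches the single inequality the paper checks. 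What your version buys is that no new splitting argument is needed—everything is assembled from facts already on record in the paper. What it costs is reliance on the containment characterization, which the paper asserts without proof and whose justification is essentially the same center-splitting argument that the paper's proof of this lemma carries out explicitly; the paper's proof is thus self-contained where yours is not. Both are valid proofs given what the paper takes as established.
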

\begin{proof}
Let $u_1u_2$ be the central edge of $T$ if $d-j-1$ is odd and any edge containing the central vertex of $T$ if $d-j-1$ is even.  Deleting $u_1u_2$ from $T$ then leaves two trees $T_1$ and $T_2$ containing $u_1$ and $u_2$, respectively, with $u_i$ having degree at most $k-1$ and eccentricity at most $\left\lfloor (d-j-1)/2\right\rfloor$ in $T_i$.  Thus each $T_i$ is isomorphic to a subtree of $T^*_{k-1,j}$, since $\left\lfloor (d-j-1)/2\right\rfloor\leq j$ for $j=\left\lfloor d/3\right\rfloor$.  Hence $|V(T)|=|V(T_1)|+|V(T_2)|\leq 2|V(T^*_{k-1,j})|$.
\end{proof}
\begin{proposition}
In Case III, $f^Z_{X\cup Y\cup Z}(v)$ exists, and setting $f(v)=f^Z_{X\cup Y\cup Z}(v)$ is a valid move for Ranker.
\end{proposition}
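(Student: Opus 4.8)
The plan is to separate the two assertions and to observe that validity is essentially free, so that all of the work goes into existence. For validity, note that $B=X\cup Y\cup Z$ is the entire label set, so every vertex of the component $T(v)$ is labeled from $B$ and hence $T_{X\cup Y\cup Z}(v)=T(v)$. The first alternative in the hypothesis of Lemma~\ref{valid} therefore holds automatically, and once we know that $f^Z_{X\cup Y\cup Z}(v)$ exists, Lemma~\ref{valid} immediately certifies that assigning it to $v$ is legal. Thus the proposition reduces entirely to the existence of $f^Z_{X\cup Y\cup Z}(v)$.

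For existence I would appeal to Lemma~\ref{exist} with $A=Z$ and $B=X\cup Y\cup Z$, whose remaining hypothesis requires that every component of $T(v)-v$ lack some label of $Z$; equivalently, that no component carry all $|Z|=2|V(T^*_{k-1,j})|$ distinct labels of $Z$. Suppose toward a contradiction that one component $C$ of $T(v)-v$ does use every label of $Z$. By Lemma~\ref{separate} no two $Z$-labeled vertices are separated by a $Y$-labeled vertex, so the $Z$-labeled vertices of $C$ together with their connecting $X$-labeled vertices form a single subtree lying in $T_{X\cup Z}(v)$; applying Lemma~\ref{z} to the vertex carrying the largest label of $Z$ present, I obtain a subtree $S\subseteq C$ that realizes all $|Z|$ labels of $Z$ and in which that vertex sits as a leaf. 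Adjoining $v$ and the path joining it to $S$ then yields a subtree $S^{+}$ of $T_{k,d}$ containing strictly more than $|Z|$ vertices, since $v\notin V(S)$.

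The crux is then to bound the diameter of $S^{+}$ by $d-j-1$, for Lemma~\ref{largest} would force $|V(S^{+})|\le 2|V(T^*_{k-1,j})|=|Z|$, contradicting $|V(S^{+})|>|Z|$ and completing the proof. To control the diameter I would examine the two endpoints of a longest path, each a leaf of $S^{+}$ and hence equal either to $v$ or to a $Z$-labeled vertex. When $v$ is an endpoint the length is at most the eccentricity of $v$ in $T(v)$, which the first condition of Case~III bounds by $d-j-1$. When both endpoints are $Z$-labeled, I would track the later of the two to be presented: Lemma~\ref{z} places it as a leaf of a witness subtree containing all smaller labels of $Z$ at the moment of its arrival, its eccentricity there is below $d-j$ by Case~III, and the other endpoint together with the connecting path is already present, so the two endpoints again lie within distance $d-j-1$.

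I expect this diameter bound, and specifically the analysis of the later-presented endpoint, to be the main obstacle. The difficulty is that two $Z$-labeled vertices that are far apart in the final $T(v)$ may have been born in distinct components that only merge afterward, and a label of $Z$ may be reused, so I cannot simply assert that the later endpoint ``sees'' the earlier one within its eccentricity. Resolving this requires combining the leaf-attachment structure of Lemma~\ref{z} with the defining conditions of Case~I to rule out that the junction realized by the last presented vertex on the longest path is an interior $X$-labeled connector rather than an endpoint; once this is settled the eccentricity estimate applies and the remainder is the mechanical bookkeeping of Lemmas~\ref{largest} and~\ref{exist}.
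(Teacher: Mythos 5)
Your first two steps are sound and agree with the paper: validity is immediate because $T_{X\cup Y\cup Z}(v)=T(v)$, and Lemma~\ref{exist} reduces everything to showing that no component of $T(v)-v$ carries all of $Z$; your construction of $S$ and $S^{+}$ via Lemma~\ref{z}, aimed at contradicting Lemma~\ref{largest}, is also the right target. But the step you yourself flag as the main obstacle is a genuine gap, and your proposed repair cannot close it. When both endpoints $w_1,w_2$ of a longest path of $S^{+}$ carry labels from $Z$, the later-presented endpoint, say $w_2$, need not have had $w_1$ in its component at presentation time: their components may be merged only afterward by a vertex $x$ lying on the $w_1,w_2$-path. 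One can check that such an $x$ must itself receive a label from $Z$ (Lemma~\ref{separate} rules out $Y$, and Case I cannot apply to a vertex joining two components that each already contain $Z$-labeled vertices), but this does not rescue the estimate: Case III applied to $x$ bounds $\mathrm{dist}(x,w_1)$ and $\mathrm{dist}(x,w_2)$ by $d-j-1$ separately, giving only $\mathrm{dist}(w_1,w_2)\leq 2(d-j-1)$, which is far too weak for Lemma~\ref{largest}. So ruling out ``interior junctions,'' as you suggest, is not possible---such merging vertices genuinely occur---and no analysis of the final configuration's longest path alone yields the required bound.

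The paper closes this gap by arguing chronologically rather than on the final tree. Order the moments at which a label of $Z$ is used for the first time in a component; by Lemma~\ref{z}, each such moment attaches the newly labeled vertex, at the end of a path, as a leaf of the previous witness subtree, and by the first condition of Case III that vertex has eccentricity less than $d-j$ in its current component, hence in the grown subtree. Two elementary facts then propagate a diameter bound through time: a leaf attached to a vertex of eccentricity at least $d-j$ itself has eccentricity at least $d-j$ (so if the terminal vertex of an attached path has small eccentricity at its insertion, so did every intermediate vertex of that path), and a leaf whose insertion raises the diameter has eccentricity equal to the new diameter. Consequently, if every newly $Z$-labeled vertex, and finally $v$ itself, has eccentricity less than $d-j$ at insertion, the growing subtree keeps diameter at most $d-j-1$ throughout, so Lemma~\ref{largest} caps it at $2|V(T^*_{k-1,j})|=|Z|$ vertices---contradicting that it must contain a vertex for each of the $|Z|$ labels plus $v$. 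This induction over insertion order, rather than a case analysis of the endpoints of a final longest path, is the ingredient your proposal is missing; without it the proof is incomplete.
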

\begin{proof}
Note that $T_{X\cup Y\cup Z}(v)=T(v)$, so if $f^Z_{X\cup Y\cup Z}(v)$ exists, then by Lemma \ref{valid} it is a valid label for $v$.  If $T(v)$ uses at most $2|V(T^*_{k-1,j})|$ labels from $Z$, then by Lemma \ref{exist} $f^Z_{X\cup Y\cup Z}(v)$ exists, since $|Z|=2|V(T^*_{k-1,j})|$.  By Lemma \ref{z} and the first condition of Case III, the number of labels from $Z$ used in $T(v)$ is at most the number of times a vertex $u$ in $T_{X\cup Z}(v)$ was presented as a leaf of $T_{X\cup Y}(u)$ having eccentricity less than $d-j$ in $T_{X\cup Y}(u)$.  Since any leaf added adjacent to a vertex having eccentricity at least $d-j$ will itself have eccentricity at least $d-j$, it suffices to show that growing a subtree of $T_{k,d}$ by iteratively adding one leaf $2|V(T^*_{k-1,j})|$ times eventually forces some new leaf to have eccentricity at least $d-j$ at the time of its insertion.  Since any leaf whose insertion raises the diameter of the tree has eccentricity equal to the higher diameter, this statement follows from Lemma \ref{largest}.
\end{proof}
\section{Trees with few internal vertices} 
\label{sec:small}
Recall that $\mathcal{T}^{p,q}$ is the family of trees having at most $p$ internal vertices and diameter at most $q$.  We first exhibit a strategy for Ranker on $\mathcal{T}^{p,q}$ that uses no label larger than $p+q+1$.  We can improve this bound for the class of double stars by proving $\mathring{\rho}(\mathcal{T}^{2,3})=4$ (since every tree with diameter $3$ has exactly two internal vertices, $\mathcal{T}^{2,3}$ is the family of trees with diameter $3$).  This extends the work of Schiermeyer, Tuza, and Voigt \cite{STV}, who characterized the families of graphs with on-line ranking number $1$, $2$, and $3$.
\subsection{Upper bound on $\mathring{\mathcal{T}}^{p,q}$}
During the on-line ranking game on $\mathcal{T}^{p,q}$, let $S$ be the component of the current graph containing the unlabeled presented vertex $v$.  We give Ranker a procedure for ranking $v$ based solely on $S$ and the labels given to the other vertices of $S$.
\begin{algorithm}\label{Ranksmall}
If $v$ is the only vertex in $S$, let $f(v)=q+1$.  If $v$ is not the only vertex in $S$, then let $m$ denote the largest label already used on $S$.  If there exists a label smaller than $m$ that completes a ranking when assigned to $v$, give $v$ the largest such label.  Otherwise, let $f(v)=m+1$.
\end{algorithm}
\begin{lemma}\label{Leaf}
If $v$ arrives as a leaf of a nontrivial component $S$ whose highest ranked vertex has label $m$, then Algorithm \ref{Ranksmall} will assign $v$ a label smaller than $m$.
\end{lemma}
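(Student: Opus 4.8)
The plan is to reduce the statement to the existence of a valid label below $m$, and then to control the set of labels that such a leaf is forbidden to reuse. Write $w$ for the unique neighbor of $v$ in $S$ and $z$ for the unique vertex of $S$ carrying the top label $m$ (the top label is unique in any ranking). By the definition of Algorithm~\ref{Ranksmall}, it suffices to exhibit a single label in $\{1,\dots,m-1\}$ whose assignment to $v$ completes a ranking; the algorithm will then use the largest such label, which is still below $m$.

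First I would characterize the forbidden labels. Call a vertex $u\in S$ \emph{$w$-visible} if the $w,u$-path has every vertex other than $u$ labeled below $f(u)$. Because $v$ is a leaf at $w$, a label $\ell$ fails to complete a ranking at $v$ exactly when some $w$-visible $u$ has $f(u)=\ell$; moreover two $w$-visible vertices cannot share a label, since the path between them would have all interior labels below that common value, contradicting that $S$ is a ranking. Hence the forbidden labels are precisely the distinct values carried by the $w$-visible vertices, and their minimum is $f(w)$ itself, so \emph{every} label below $f(w)$ is available. This disposes of the case $f(w)\ge 2$ at once: since each singleton receives label $q+1$ and neither extending nor merging components can lower a maximum, an easy induction on the rounds gives $m\ge q+1\ge 2$, so the label $1$ is both available and smaller than $m$.

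It remains to treat $f(w)=1$, equivalently to rule out that the $w$-visible vertices realize \emph{all} of $\{1,\dots,m\}$. Here I would invoke the diameter bound: since $v$ is a legal leaf, $S+v$ is a connected subgraph of a tree of diameter at most $q$, so $\mathrm{dist}(v,x)\le q$ for every $x$, whence the eccentricity of $w$ in $S$ is at most $q-1$. The goal is therefore to show that the number of $w$-visible vertices is at most $q$, which together with $m\ge q+1$ forces a gap in $\{1,\dots,m\}$ and hence an available label below $m$. This in turn would follow from the structural claim that the $w$-visible vertices lie along a single path leaving $w$ with strictly increasing labels, so that their count is at most $\mathrm{ecc}_S(w)+1\le q$; equivalently, a vertex labeled $1$ is too deep to carry a new leaf, because being driven down to label $1$ means that at its arrival the labels $2,\dots,m$ were already blocked along a strictly increasing chain, forcing its eccentricity to at least $q$.

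The main obstacle is this last structural claim, which I expect to require an inductive invariant on the labelings produced by Algorithm~\ref{Ranksmall}: low labels never sit at the foot of two independent increasing branches, the ``bushy'' configuration (such as a center labeled $1$ with leaves labeled $2,\dots,m$) in which every label below the maximum is blocked. Such configurations satisfy every ranking axiom, so they cannot be excluded from arbitrary rankings; they are ruled out only because the rule ``assign the largest valid label below the current maximum'' forces each repeated or low label to lie beneath a strictly larger ceiling on the way to the global maximum. Carrying this invariant through component merges, where the connecting vertex is driven above both former maxima, and reconciling it with the fact that a leaf may legitimately receive a label far larger than its neighbor's, is the delicate part; the diameter constraint $q$ enters precisely to ensure that the vertices still eligible to receive a leaf are shallow enough to keep the number of visible labels below $m$.
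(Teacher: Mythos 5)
There is a genuine gap. The setup is fine: your characterization of the forbidden labels as the (distinct) labels of the $w$-visible vertices is correct, as is the observation that every label below $f(w)$ is available, which settles the case $f(w)\geq 2$ (using $m\geq q+1\geq 2$); this matches the first step of the paper's argument, where it is shown that if the algorithm fails on a leaf then that leaf's neighbor must be labeled $1$. But the entire difficulty of Lemma \ref{Leaf} lives in the case $f(w)=1$, and there your argument stops at an unproven ``structural claim'' that you yourself flag as the main obstacle, so the proposal is a plan rather than a proof. Moreover, that claim is false as stated: the $w$-visible vertices need not lie on a single path leaving $w$. For example, with $q=3$, present $a$ (labeled $4$ by Algorithm \ref{Ranksmall}), then $b$ adjacent to $a$ (labeled $3$), then an isolated vertex $c$ (labeled $4$), then $d$ adjacent to both $b$ and $c$: since the two vertices labeled $4$ must be separated by a larger label, the algorithm assigns $f(d)=5$ (legal here, as $d$ arrives as an internal vertex). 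Now the $b$-visible vertices are $b,a,d$ with labels $3,4,5$, and $a$ and $d$ lie on different sides of $b$. What is true, and what needs proof, is a counting or depth statement (roughly: a vertex labeled $1$ is so deep that no leaf can legally be attached to it), not a single-path statement.

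The paper closes exactly this gap with a descent argument you would need some version of. Suppose the algorithm assigns $m+1$ to the leaf $v$; set $v_0=v$ and iteratively let $v_{i+1}$ be a least-labeled neighbor of $v_i$ at the moment $v_i$ was presented, stopping at a vertex $v_j$ that arrived isolated. The key invariant is that Algorithm \ref{Ranksmall} assigns a label $a\neq q+1$ only to a vertex that, at arrival, has a neighbor labeled at most $a+1$ (otherwise $a+1$ would also have completed a ranking, contradicting maximality of $a$). Hence $f(v_1)=1$ forces $f(v_i)\leq i$ for $i<j$, while $f(v_j)=q+1$ forces $f(v_{j-1})\geq q$; thus $j\geq q+1$ and $v_0,\ldots,v_j$ is a path of length exceeding $q$, contradicting the diameter bound. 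Note that this invariant refers only to the instant each vertex arrives, so it survives component merges automatically---precisely the difficulty your sketch identifies with carrying a global invariant about ``bushy'' configurations through merges but does not resolve. If you wish to salvage your framework, the statement to prove in the case $f(w)=1$ is that $w$ has eccentricity at least $q$ in $S$, and the proof of that is the descent argument just described.
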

\begin{proof}
Suppose that Algorithm \ref{Ranksmall} assigns $f(v)=m+1$.  Let $v_0=v$.  We now select vertices $v_1,\ldots,v_j$ from $S$ such that $v_0,v_1,\ldots,v_j$ in order form a path $P$ and $v_j$ arrived as an isolated vertex.  For $i\geq 0$, let $v_{i+1}$ be a vertex with the least label among all vertices that were adjacent to $v_i$ when $v_i$ was presented, unless $v_i$ arrived as an isolated vertex, in which case set $j=i$.  Since $S$ is finite, the process must end with some vertex $v_j$.  Since $v_i$ was presented as a neighbor of $v_{i+1}$, $P$ is a path.

Note that Algorithm \ref{Ranksmall} assigns $f(u)=a\neq q+1$ only if $u$ arrives as a neighbor of a vertex $w$ such that $f(w)\leq a+1$.  Since $f(v_1)=1$ (otherwise $f(v_0)=f(v_1)-1<m$), we must have $f(v_i)\leq i$ for $1\leq i<j$.  Also, $f(v_j)=q+1$ because $v_j$ arrived as an isolated vertex.  Since $v_j$ was chosen as the neighbor with the least label when $v_{j-1}$ arrived, $f(u)>q$ for any such neighbor $u$.  Hence $f(v_{j-1})\geq q$.  Therefore $j-1\geq q$, which gives $P$ length $q+1$, contradicting $S$ having diameter at most $q$.
\end{proof}
\begin{theorem}\label{RanksmallThm}
Algorithm \ref{Ranksmall} uses no label larger than $p+q+1$.
\end{theorem}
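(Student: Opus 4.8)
The plan is to track, for each connected component of the partially built forest, how its largest label grows as vertices arrive, and to charge every increase to a distinct internal vertex of the surrounding tree.

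First I would classify each presented vertex $v$ by its degree at the moment of arrival. Since the presented graph is always a forest, $v$ is adjacent to at most one vertex in each existing component, so $v$ either (i) arrives isolated, (ii) arrives with degree $1$ as a leaf of a single component, or (iii) arrives with degree at least $2$, merging two or more components. In case (i) the algorithm assigns $f(v)=q+1$, creating a new component whose largest label is $q+1$. In case (ii) Lemma \ref{Leaf} applies, since the enlarged component is nontrivial; hence $v$ receives a label smaller than the current largest label $m$ of the component it joins, and that largest label is unchanged. In case (iii) the algorithm assigns either a label below $m$ or $m+1$, where $m$ is the largest label among the merged components; either way the new largest label exceeds $m$ by at most $1$. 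Thus the only arrivals that can raise a component's largest label are isolated vertices (which only ever produce the value $q+1$) and degree-$\ge 2$ arrivals, and each of the latter raises it by at most one.

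Next I would make this quantitative via an invariant. For a current component $C$, let $g(C)$ be the number of its vertices that arrived with degree at least $2$; each such vertex then has at least two neighbours inside $C$, so is an internal vertex of $C$. Writing $\mu(C)$ for the largest label on $C$, I claim $\mu(C)\le q+1+g(C)$, proved by induction along the construction. The base case is an isolated arrival, where $\mu=q+1$ and $g=0$. A degree-$1$ arrival changes neither $\mu$ nor $g$. For a merge of components $C_1,\dots,C_d$ via $v$ we have $g(C')=1+\sum_i g(C_i)$ and $\mu(C')\le 1+\max_i\mu(C_i)\le q+1+(1+\max_i g(C_i))$, and the elementary inequality $\max_i g(C_i)\le\sum_i g(C_i)$ closes the induction.

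Finally, since every component $C$ arising during the game is an induced subtree of some tree $T\in\mathcal{T}^{p,q}$, every internal vertex of $C$ is internal in $T$, so $g(C)\le|I(C)|\le|I(T)|\le p$. Combining this with the invariant gives $\mu(C)\le q+1+p$ at every stage, which is the claimed bound. I expect the main obstacle to be the merge case (iii): a single arriving vertex may fuse several components whose largest labels are all close to the current maximum, so one must check that the single shared increment of $+1$ is not "paid for" more than once. This is exactly what the additive bookkeeping in $g$ guarantees through $\max_i g(C_i)\le\sum_i g(C_i)$; the point requiring care is verifying that the degree-$\ge 2$ arrivals counted by $g$ are genuinely distinct internal vertices and therefore bounded in number by $p$.
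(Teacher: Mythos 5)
Your proof is correct and follows essentially the same route as the paper's: by Lemma \ref{Leaf} only vertices arriving with degree at least $2$ (hence internal vertices of the surrounding tree, of which there are at most $p$) can raise the largest label beyond the initial $q+1$, and each such arrival raises it by at most $1$. Your per-component invariant $\mu(C)\le q+1+g(C)$ is simply a more explicit bookkeeping of the charging argument that the paper compresses into one sentence.
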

\begin{proof}
By Lemma \ref{Leaf}, the only way for a new largest label greater than $q+1$ to be used on $S$ is for the unlabeled vertex to arrive as an internal vertex.  Only the $p$ internal vertices of an element of $\mathcal{T}^{p,q}$ can be presented as such, and each time a new largest label is used it increases the largest used value by $1$, so the largest label that could be used on one of them would be $p+q+1$.
\end{proof}
\subsection{Double stars}
For any forest $F$, Schiermeyer, Tuza, and Voigt \cite{STV} proved $\mathring{\rho}(F)=1$ if and only if $F$ has no edges, $\mathring{\rho}(F)=2$ if and only if $F$ has an edge but no component with more than one edge, and $\mathring{\rho}(F)=3$ if and only if $F$ is a star forest with maximum degree at least $2$ or $F$ is a linear forest whose largest component is $P_4$.  Since $P_4$ is the only member of $\mathcal{T}^{2,3}$ having on-line ranking number less than $4$, proving $\mathring{\rho}(\mathcal{T}^{2,3})=4$ only requires a strategy for Ranker, and our result implies $\mathring{\rho}(T)=4$ for any $T\in\mathcal{T}^{2,3}-\{P_4\}$.  We now make some observations about the on-line ranking game on $\mathcal{T}^{2,3}$ before giving a strategy for Ranker.

When a vertex $u$ is presented, let $G(u)$ be the graph at that time, and let $T(u)$ be the component of $G(u)$ containing $u$.  When the first edge(s) appear, the presented vertex $v$ is the center of a star; thus $T(v)$ is a star, while $G(v)$ may include isolated vertices in addition to $T(v)$.  Let $v'$ be the first vertex to complete a path of length $3$.  The graph $G(v')$ is connected and has two internal vertices, properties that remain true as subsequent vertices are presented.  Let $T$ be the final tree.

Consider the round when a vertex $u$ is presented.  If $u$ is presented after $v'$, or $u=v'$ and $u$ is a leaf of $T(u)$, then $G(u)=T(u)$, and $u$ must be a leaf in $T$.  If $u$ is presented after $v$ but before $v'$, then either $T(u)=u$ or $T(u)$ is a star not centered at $u$.  If additionally $G(u)$ is disconnected, then $u$ must wind up as a leaf in $T$, since $T$ has diameter $3$.  Call $u$ a \emph{forced leaf} in this case, the case that $u$ is presented after $v'$, or the case that $u=v'$ and $u$ is presented as a leaf of $T(u)$.  Otherwise, if $u$ is presented after $v$ but before $v'$, then $u$ is a leaf of $T(u)$, and say that $u$ is \emph{undetermined} (since $u$ may or may not wind up as a leaf in $T$).  Also call $v$ undetermined, as well as $v'$ if $v'$ is not a forced leaf.
\begin{algorithm}\label{Rankdoublestar}
Give label $3$ to the first vertex presented, label $2$ to any subsequent vertex presented before $v$, and label $1$ to any forced leaf.  The rest of the algorithm specifies how to rank the undetermined vertices in terms of the labeling of $G(v)$.

If $G(v)=P_2$, then give label $4$ to $v$ and label $2$ to any subsequent undetermined vertex.  If $G(v)$ has more than one edge (disconnected or not), and $v$ is adjacent to the vertex labeled $3$, then give label $4$ to $v$ and label $3$ to any subsequent undetermined vertex.

If neither of the previous cases hold, then $G(v)$ is disconnected, and $v$ and $v'$ are the only undetermined vertices.  If $G(v)$ has exactly one edge, and $v$ is adjacent to the vertex labeled $3$, then give label $2$ to $v$ and label $4$ to $v'$.  In the remaining case, $v$ is not adjacent to the vertex labeled $3$; give label $3$ to $v$ and label $4$ to $v'$.
\end{algorithm}
\begin{figure}[ht]
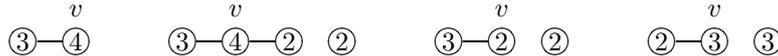

\centering
\[ \xygraph{
!{<0cm,0cm>;<0.7cm,0cm>:<0cm,2cm>::}
!{(0,0) }*+[o][F-:<3pt>]{3}="a"
!{(1,0) }*+[o][F-:<3pt>]{4}="b"
!{(1,.2) }*{v}="v1"
!{(3,0) }*+[o][F-:<3pt>]{3}="c"
!{(4,0) }*+[o][F-:<3pt>]{4}="d"
!{(4,.2) }*{v}="v2"
!{(5,0) }*+[o][F-:<3pt>]{2}="e"
!{(6,0) }*+[o][F-:<3pt>]{2}="f"
!{(8,0) }*+[o][F-:<3pt>]{3}="g"
!{(9,0) }*+[o][F-:<3pt>]{2}="h"
!{(9,.2) }*{v}="v3"
!{(10,0) }*+[o][F-:<3pt>]{2}="i"
!{(12,0) }*+[o][F-:<3pt>]{2}="j"
!{(13,0) }*+[o][F-:<3pt>]{3}="k"
!{(13,.2) }*{v}="v4"
!{(14,0) }*+[o][F-:<3pt>]{3}="l"
"a"-"b"
"c"-"d"-"e"
"g"-"h"
"j"-"k"
} \]
\caption{Possibilities for $G(v)$.}
\end{figure}
\begin{proposition}
$\mathring{\rho}(\mathcal{T}^{2,3})=4$.
\end{proposition}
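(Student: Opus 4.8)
The plan is to prove the two inequalities $\mathring{\rho}(\mathcal{T}^{2,3})\geq 4$ and $\mathring{\rho}(\mathcal{T}^{2,3})\leq 4$ separately, drawing the lower bound from prior work and establishing the upper bound by verifying that Algorithm \ref{Rankdoublestar} is a legal strategy for Ranker that never writes a label exceeding $4$. For the lower bound I would invoke the characterization of Schiermeyer, Tuza, and Voigt \cite{STV} quoted above: a double star other than $P_4$ is neither edgeless, nor a disjoint union of single edges, nor a star forest, nor a linear forest, so its on-line ranking number lies in none of the classes with value $1$, $2$, or $3$, and hence is at least $4$. Fixing one such double star $D\in\mathcal{T}^{2,3}$ and applying the monotonicity remark from the introduction (that $\mathring{\rho}(\mathcal{G}')\leq\mathring{\rho}(\mathcal{G})$ when every member of $\mathcal{G}'$ is an induced subgraph of a member of $\mathcal{G}$) with $\mathcal{G}'=\{D\}$ and $\mathcal{G}=\mathcal{T}^{2,3}$ yields $\mathring{\rho}(\mathcal{T}^{2,3})\geq\mathring{\rho}(D)\geq 4$.

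For the upper bound, the first observation is that Algorithm \ref{Rankdoublestar} writes only labels in $\{1,2,3,4\}$, which is immediate by inspection of its instructions. The substance is to show that the function it produces is a valid ranking of the final tree $T$. I would first reduce to checking validity of $T$ alone: since $T$ is a tree, the restriction of any ranking of $T$ to an induced subgraph is again a ranking, because for two equally labeled vertices lying in one component of the subgraph their connecting path coincides with their unique path in $T$, so the larger-labeled internal vertex guaranteed by the ranking of $T$ actually survives in the subgraph. Each intermediate graph $G_i$ is exactly the induced subgraph $T[\{v_1,\ldots,v_i\}]$, so once the final labeling is shown to rank $T$, validity at every earlier round follows automatically, and Ranker's play is legal throughout.

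Then I would run the case analysis dictated by the algorithm, organized by the vertex classification introduced before it (\emph{forced leaf}, \emph{undetermined}, and the distinguished vertices $v$ and $v'$) and by the shape of $G(v)$ recorded in the accompanying figure of possibilities. In each case the final tree $T$ is a double star; I would locate its two centers and check that they receive distinct labels with the larger equal to $4$, that every leaf receives a label in $\{1,2,3\}$ smaller than the label of its own center, and that any two equally labeled vertices have a higher-labeled center on the path between them. The connected configurations, where $v$ is assigned label $4$, are direct.

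The hard part will be the disconnected configurations treated in the last paragraph of the algorithm, where the eventual center-or-leaf roles of $v$ and $v'$ are not yet determined when $v$ is presented. The genuine danger is that the early vertex labeled $3$ later becomes a leaf adjacent to a center that would otherwise also be labeled $3$, placing two equal labels on a single edge, which no intervening vertex can separate. The algorithm forestalls exactly this by splitting according to whether $v$ is adjacent to the vertex labeled $3$, assigning $v$ the label $2$ or $3$ accordingly while reserving $4$ for $v'$; I would verify in each branch that the label-$4$ center separates every offending equal pair and that no equal labels are ever placed on adjacent vertices, which closes the argument. Combining the two bounds gives $\mathring{\rho}(\mathcal{T}^{2,3})=4$, and as noted this also forces $\mathring{\rho}(T)=4$ for every $T\in\mathcal{T}^{2,3}\setminus\{P_4\}$.
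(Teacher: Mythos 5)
Your overall route coincides with the paper's: the lower bound via the Schiermeyer--Tuza--Voigt characterization plus the monotonicity remark, and the upper bound by certifying Algorithm \ref{Rankdoublestar}, together with the (correct, and worth making explicit) reduction that it suffices to check the final labeling, since the restriction of a ranking to an induced subgraph is again a ranking. The gap is in the verification you plan to run: two of the invariants you propose to check are false for this algorithm, so executing your plan as written would fail. Consider the play: present $a$ (first vertex, label $3$); present $v$ adjacent to $a$ (so $G(v)=P_2$ and $v$ gets $4$); present $w$ adjacent to $a$ ($w$ is undetermined and gets $2$); present $x$ adjacent to $w$ (then $x=v'$, but $x$ arrives as a leaf of $T(x)$, hence is a forced leaf and gets $1$). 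The final tree is the path on $v,a,w,x$ with labels $4,3,2,1$: a double star whose two centers are labeled $3$ and $2$, while the label $4$ sits on the leaf $v$, which exceeds the label of its own center. So neither ``the centers get distinct labels with the larger equal to $4$'' nor ``every leaf gets a label in $\{1,2,3\}$ smaller than its center's label'' is true, and your assertion that the connected configurations are direct because $v$ (labeled $4$) is a center is wrong precisely in the $G(v)=P_2$ case, where $v$ can end up a leaf.

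What actually closes the argument---and is what the paper does---is a label-by-label separation check tailored to each shape of $G(v)$: in the $G(v)=P_2$ case, labels $3$ and $4$ are each used at most once, every $1$ is a leaf, and any two $2$'s are separated by one of the first two vertices, both of which carry higher labels; in the case where $G(v)$ has more than one edge and $v$ is adjacent to the vertex labeled $3$, the repeatable label is $3$ and the separator is $v$ itself (labeled $4$); in the disconnected cases the separator is $v'$. For those disconnected cases you would also need to argue that $v'$ really is undetermined (it arrives as an internal vertex joining the components), since a forced-leaf $v'$ would be labeled $1$ and then no vertex labeled $4$ would exist to separate the two $3$'s (or $2$'s); this holds because the stranded isolated vertices force the second center of the double star to be a later-presented vertex, and the first path of length $3$ appears exactly when that vertex arrives. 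Your weaker fallback criterion (any two equally labeled vertices have a higher-labeled vertex between them) is the right one, but your plan front-loads false claims, and the case analysis---which is the entire substance of the upper bound---is deferred rather than carried out.
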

\begin{proof}
Because $P_4$ is the only tree with exactly two internal vertices having on-line ranking number at most 3, we need only to verify that Algorithm \ref{Rankdoublestar} is a valid strategy for Ranker.  

If $G(v)=P_2$, then every vertex labeled $1$ is a leaf, and the only label besides $1$ that can be used more than once is $2$.  Any two vertices labeled $2$ must be separated by one of the first two vertices presented, each of which receives a higher label.

If $G(v)$ has more than one edge, and $v$ is added adjacent to the vertex labeled $3$, then every vertex labeled $1$ is a leaf, and the only vertex labeled $4$ is $v$, which is an internal vertex.  If the other internal vertex is labeled $3$, then each leaf adjacent to it is labeled $1$ or $2$.  Any two vertices labeled $3$ must be separated from each other by $v$, which is labeled $4$, and any two vertices labeled $2$ must be separated from each other by an internal vertex, which is labeled either $3$ or $4$.  If the internal vertex besides $v$ is labeled $2$, then each adjacent leaf must be labeled $1$.  Any two vertices with the same label of $2$ or $3$ would have to be separated from each other by $v$, which is labeled $4$. 

If $G(v)$ has exactly one edge but more than two vertices, and $v$ is adjacent to the vertex labeled $3$, then any vertex labeled $1$ will be a leaf, only the first vertex presented will be labeled $3$, and any two vertices labeled $2$ will be separated from each other by $v'$, which is the only vertex labeled $4$.

If $G(v)$ has more than two vertices, and $v$ is not adjacent to the vertex labeled $3$, then any vertex labeled $1$ will be a leaf, and any two vertices with the same label of $2$ or $3$ will be separated from each other by $v'$, which is the only vertex labeled $4$.
\end{proof}
\section*{Acknowledgments}
Special thanks to Prof. Doug West for his helpful guidance and editing advice.  This work was supported by National Science Foundation grant DMS 08-38434 EMSW21-MCTP: Research Experience for Graduate Students.

\end{document}